\newcommand{\rhdl}{\stackrel{c}{\rhd}}
\newcommand{\prs}{\langle\;,\;\rangle}
\newcommand{\too}{\longrightarrow}
\newcommand{\om}{\omega}
\newcommand{\esp}{\quad\mbox{and}\quad}
\newcommand{\wi}{\widetilde}
\def\br{[\;,\;]}
\newcommand{\G}{\mathfrak{g}}
\newcommand{\h}{{\mathfrak{h}}}
\newcommand{\ad}{{\mathrm{ad}}}
\newcommand{\Ad}{{\mathrm{Ad}}}
\newcommand{\tr}{{\mathrm{tr}}}
\newcommand{\di}{\displaystyle}
\newcommand{\e}{\epsilon}
\newcommand{\de}{\delta}
\newtheorem{Def}{Definition}[section]
\newtheorem{theo}{Theorem}[section]
\newtheorem{pr}{Proposition}[section]
\newtheorem{co}{Corollary}[section]
\newtheorem{remark}{Remark}
\newtheorem{conjecture}{Conjecture}
\font\bb=msbm10
\def\R{\hbox{\bb R}}
\def\N{\hbox{\bb N}}
\def\C{\mathbb{ C}}
\begin{document}

\begin{frontmatter}


 

\title{ Analytic Linear Lie rack Structures on Leibniz Algebras}

\author[label1]{Hamid Abchir}
\address[label1]{Universit\'e Hassan II\\ Ecole Sup\'erieure de Technologie
	\\Route d'El Jadida Km 7, B.P. 8012, 20100 Casablanca, Maroc\\
	{e-mail: h\_abchir@yahoo.com}}
\author[label2]{Fatima-Ezzahrae Abid}
\address[label2]{Universit\'e Cadi-Ayyad\\
	Facult\'e des sciences et techniques\\
	BP 549 Marrakech Maroc\\
	{e-mail: abid.fatimaezzahrae@gmail.com}}
 \author[label3]{Mohamed Boucetta}
 \address[label3]{Universit\'e Cadi-Ayyad\\
 	Facult\'e des sciences et techniques\\
 	BP 549 Marrakech Maroc\\e-mail: m.boucetta@uca.ac.ma}



\begin{abstract}  A linear Lie rack structure on a finite dimensional vector space $V$
	 is a Lie rack operation $(x,y)\mapsto x\rhd y$ pointed at the origin and such that for any $x$, the left translation $\mathrm{L}_x:y\mapsto \mathrm{L}_x(y)= x\rhd y$ is linear. A linear Lie rack operation $\rhd$ is called analytic if for any $x,y\in V$,
	 \[ x\rhd y=y+\sum_{n=1}^\infty A_{n,1}(x,\ldots,x,y), \]where $A_{n,1}:V\times\ldots\times V\too V$ is an $n+1$-multilinear map symmetric in the $n$ first arguments. In this case, $A_{1,1}$ is exactly the left Leibniz product associated to $\rhd$. Any left Leibniz algebra $(\h,\br)$ has a canonical analytic linear Lie rack structure given by $x\rhdl y=\exp(\ad_x)(y)$, where $\ad_x(y)=[x,y]$.
	 
	  In this paper, we show that a sequence $(A_{n,1})_{n\geq1}$ of $n+1$-multilinear maps on a vector space $V$ defines an analytic linear Lie rack structure  if and only if $\br:=A_{1,1}$ is a left Leibniz bracket, the $A_{n,1}$ are invariant for $(V,\br)$ and satisfy a sequence of multilinear equations. Some of these equations have a cohomological interpretation and can be solved when the zero and the 1-cohomology of the left Leibniz algebra $(V,\br)$ are trivial. On the other hand, given a left Leibniz algebra $(\h,\br)$,
	  we  show that there is a large class of (analytic) linear Lie rack structures on $(\h,\br)$ which can be built from the canonical one and invariant multilinear symmetric maps on $\h$. A left Leibniz algebra on which all the analytic linear Lie rack structures are build in this way will be called rigid.   We use our characterizations of analytic linear Lie rack structures to show that $\mathfrak{sl}_2(\R)$ and $\mathfrak{so}(3)$   are rigid.  We conjecture that  any simple Lie algebra is rigid as a left Leibniz algebra.
	
\end{abstract}

\end{frontmatter}

{\it Keywords: Lie rack, Left Leibniz algebra, multilinear algebra, simple Lie algebra}







 \section{Introduction} \label{section1}

 In the 1980's, Joyce \cite{joyce} and Matveev \cite{matveev} introduced the notion of  {\it quandle}.
 This notion has been derived from the knot theory, in the way that the axioms of a quandle are the algebraic expressions of Reidemeister moves (I,II,III) for an oriented knot diagram \cite{elhamdadi}. The quandles provide many  knot invariants.  The fundamental quandle or knot quandle was introduced  by Joyce who showed that it is  a complete invariant of a knot (up to a weak equivalence).  Racks which are a generalization of quandles  were introduced by
   Brieskorn \cite{bri} and Fenn and Rourke \cite{fenn}. Recently (see \cite{ carter1, carter2}), there has been investigations on quandles and racks from an algebraic point of view and their relationship with other algebraic structures as  Lie algebras, Leibniz algebras, Frobenius algebras, Yang Baxter equation, and Hopf algebras etc..
  
    A rack is a  non-empty set $\mathit{X}$ together with a map $\rhd  : \mathit{X} \times \mathit{X} \longrightarrow \mathit{X}$, $(a, b) \mapsto a \rhd b$ such that, for any $a,b,c\in \mathit{X}$, the map $\mathrm{L}_{a}  : \mathit{X} \longrightarrow \mathit{X}$, $b \mapsto  a\rhd b$ is a bijection and 
    \begin{equation}\label{r1} a \rhd (b \rhd c) = (a \rhd b)\rhd (a \rhd c).\end{equation}
    
   A rack $\mathit{X}$ is called $\textit{pointed}$ if there exists a distinguished element $e \, \in \mathit{X}$ such that, for any $a\in \mathit{X}$,
  		\begin{equation}\label{r2} a \rhd e = e  \esp   \mathrm{L}_{e}=\mathrm{Id}_X.\end{equation}
  		 A rack $\mathit{X}$ is called a quandle if, for any $a\in \mathit{X}$, $a \rhd a=a$.
  		 
  		 A Lie rack is a rack $(X,\rhd)$ such that $X$ is a smooth manifold, $\rhd$ is a smooth map and the left translations $\mathrm{L}_{a}$ are diffeomorphisms. 
  		 Any Lie group $G$ has a Lie rack structure given by $g\rhd h=g^{-1}hg$.

 {\it Leibniz algebras} were first introduced and investigated in the papers of  Bloh \cite{bloh, bloh1} under the name of D-algebras.
  Then they were  rediscovered by  Loday \cite{loday} who called them  Leibniz algebras. A  left Leibniz algebra is an algebra $(\h,\br)$ over a field $\mathbb{K}$ such that, for every element $u\in\h$, $\ad_u:\h\too\h$, $v\mapsto[u,v]$ is a derivation of $\mathfrak{h}$, i.e., 
  \begin{equation}\label{l1} [u,[v,w]]=[[u,v],w]+[v,[u,w]],\quad v,w\in\h. \end{equation}Any Lie algebra is a left Leibniz algebra and a left Leibniz algebra is a Lie algebra if and only if its bracket is skew-symmetric.
  Many results of the theory of Lie algebras can be extended to left Leibniz algebras (see \cite{ayp, ayp1, ayp2}). 
 
  In 2004,  Kinyon \cite{kinyon} proved that if $(X,e)$ is a pointed Lie rack, $T_eX$  carries a structure of left Leibniz algebra. 
  Moreover,  in the case when  the Lie rack structure is associated to a Lie group $G$ then the associated left Leibniz algebra is the Lie algebra of $G$. 
  
  Given a pointed Lie rack $(X,e)$, for any $a\in X$, we denote by $\Ad_a:T_eX=\h\too \h$ the differential of $\mathrm{L}_a$ at $e$. We have
  \[ \mathrm{L}_{a\rhd b}=\mathrm{L}_a\circ \mathrm{L}_b\circ \mathrm{L}_a^{-1}\esp \Ad_{a\rhd b}=
  \Ad_a\circ \Ad_b\circ \Ad_a^{-1}. \]
  Thus $\Ad:X\too \mathrm{GL}(\h)$ is an homomorphism of Lie racks. If we put
  \[ [u,v]_{\rhd}=\frac{d}{dt}_{|t=0}\Ad_{c(t)}v,\quad u,v\in\h,c:]-\e,\e[\too X,\; c(0)=e,c'(0)=u, \]$(\h,[\;,\;]_{\rhd})$ becomes a left Leibniz algebra.

  A {\it linear} Lie rack structure on a finite dimensional vector space $V$
  is a Lie rack operation $(x,y)\mapsto x\rhd y$ pointed at $0$ and such that for any $x$, the map $\mathrm{L}_x:y\mapsto x\rhd y$ is linear. A linear Lie rack operation $\rhd$ is called {\it analytic} if for any $x,y\in V$,
  \begin{equation}\label{eq4}
  x\rhd y=y+\sum_{n=1}^\infty A_{n,1}(x,\ldots,x,y),
  \end{equation}where for each $n$, $A_{n,1}:V\times\ldots\times V\too V$ is an $n+1$-multilinear map which is symmetric in the $n$ first arguments. In this case, $A_{1,1}$ is the left Leibniz bracket associated to $\rhd$.

  If $(\h,\br)$ is a left Leibniz algebra then the operation $\stackrel{c}{\rhd}:\h\times\h\too\h$ given by $$u\rhdl v=\exp(\ad_u)(v)$$ defines an analytic linear Lie rack   structure on $\h$  such that the associated left Leibniz bracket on $T_0\h=\h$ is the initial bracket $\br$.
 We call $\rhdl$ the {\it canonical} linear Lie rack structure associated to $(\h,\br)$. 
 
 In this paper, we will study linear Lie rack structures with an emphasis on analytic linear Lie rack structures. 
 
 Actually, there is a large class of linear Lie rack structures on $(\h,\br)$ containing the canonical one. This class was suggested to us by an example sent to us by Martin Bordemann.  The proof of the following proposition will be given in Section \ref{section2}.
 
 \begin{pr}\label{pr1} Let $(\h,\br)$ be a left Leibniz algebra, $F:\R\too\R$ a smooth function and $P:\h\times\ldots\times\h\too\R$ a symmetric multilinear $p$-form such that, for any $y,x_1\ldots,x_p\in\h$,
 	\[ \sum_{i=1}^pP(x_1,\ldots,[y,x_i],\ldots,x_p)=0. \]Then the operation $\rhd$ given by
 	\[ x\rhd y=\exp(F(P(x,\ldots,x))\ad_x)(y) \]is a linear Lie rack structure on $\h$ and its associated left Leibniz bracket is $[\;,\;]_{\rhd}=F(0)\br$. Moreover, if $F$ is analytic then $\rhd$ is analytic .
 	
 \end{pr}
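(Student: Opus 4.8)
The plan is to verify the three rack axioms directly, exploiting the fact that $\rhd$ is a "rescaled" version of the canonical rack structure $\rhdl$. Write $\lambda(x)=F(P(x,\dots,x))$, so that $x\rhd y=\exp(\lambda(x)\ad_x)(y)$. The linearity of $\mathrm{L}_x$ is immediate since $\exp(\lambda(x)\ad_x)\in\mathrm{GL}(\h)$, which also shows each $\mathrm{L}_x$ is a bijection. For the pointedness axiom \eqref{r2}: $x\rhd 0=0$ because each $\ad_x$ is linear, and $\mathrm{L}_0=\exp(\lambda(0)\ad_0)=\exp(0)=\mathrm{Id}_\h$ since $\ad_0=0$. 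So the only real content is the self-distributivity \eqref{r1}.

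The key step is the following observation about the hypothesis on $P$: the condition $\sum_{i=1}^p P(x_1,\dots,[y,x_i],\dots,x_p)=0$ says exactly that $P$ is invariant under the derivations $\ad_y$, hence $P(\exp(t\,\ad_y)x,\dots,\exp(t\,\ad_y)x)$ is constant in $t$ for every $y$; in particular, taking $t=\lambda(u)$ and $y=u$, we get $P(u\rhdl x,\dots,u\rhdl x)=P(x,\dots,x)$, so that $\lambda(u\rhdl x)=\lambda(x)$, and more to the point $\lambda(\exp(\lambda(u)\ad_u)x)=\lambda(x)$. Now I would compute both sides of \eqref{r1}. Using $\mathrm{L}_{u\rhdl v}=\exp(\ad_u)\circ\exp(\ad_v)\circ\exp(-\ad_u)$ for the canonical structure (equivalently $\ad_{u\rhdl v}=\exp(\ad_u)\ad_v\exp(-\ad_u)$, since $\Ad$ is a rack homomorphism as recalled in the introduction), one has $\exp(t\,\ad_{u\rhdl v})=\exp(\ad_u)\exp(t\,\ad_v)\exp(-\ad_u)$ for any scalar $t$. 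Therefore
\[
\mathrm{L}_{x\rhd y}=\exp\!\big(\lambda(x\rhd y)\,\ad_{x\rhd y}\big)
=\exp\!\big(\lambda(y)\,\ad_{x\rhd y}\big),
\]
using $\lambda(x\rhd y)=\lambda(\exp(\lambda(x)\ad_x)y)=\lambda(y)$ from the invariance step. Writing $x\rhd y=\exp(\lambda(x)\ad_x)(y)$ and applying the conjugation identity with $t=\lambda(y)$,
\[
\mathrm{L}_{x\rhd y}=\exp(\lambda(x)\ad_x)\circ\exp(\lambda(y)\ad_y)\circ\exp(-\lambda(x)\ad_x)
=\mathrm{L}_x\circ\mathrm{L}_y\circ\mathrm{L}_x^{-1},
\]
which is precisely the operator form of \eqref{r1}: applying both sides to $z$ gives $x\rhd(y\rhd z)=(x\rhd y)\rhd(x\rhd z)$.

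For the statement about the associated Leibniz bracket: pick a smooth curve $c(t)$ with $c(0)=0$, $c'(0)=u$. Then $[u,v]_\rhd=\frac{d}{dt}\big|_{t=0}\exp(\lambda(c(t))\ad_{c(t)})(v)$. Since $\ad_{c(0)}=0$, the product rule leaves only the term where the derivative hits the exponent's $\ad_{c(t)}$ factor, giving $[u,v]_\rhd=\lambda(0)\,\ad_u(v)=F(0)[u,v]$; the terms coming from differentiating $\lambda(c(t))$ are multiplied by $\ad_{c(0)}=0$ and drop out. Finally, if $F$ is analytic then $x\mapsto\lambda(x)=F(P(x,\dots,x))$ is a convergent power series in $x$, and composing with the entire function $\exp$ and the polynomial $\ad_x$ shows $x\rhd y-y$ expands as $\sum_{n\ge1}A_{n,1}(x,\dots,x,y)$ with each $A_{n,1}$ multilinear and symmetric in the first $n$ slots, so $\rhd$ is analytic.

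The main obstacle is the bookkeeping in the self-distributivity step — specifically, establishing cleanly that $\lambda$ is constant along the canonical rack translations (this is where the invariance hypothesis on $P$ is used, and it is essential: without it the exponents on the two sides of \eqref{r1} would not match) and that $\ad_{u\rhdl v}=\exp(\ad_u)\ad_v\exp(-\ad_u)$ upgrades to the scaled exponentials. Once those two facts are in hand, \eqref{r1} reduces to the conjugation identity $\exp(A)\exp(B)\exp(-A)=\exp(\exp(A)B\exp(-A))$ for the linear maps $A=\lambda(x)\ad_x$, $B=\lambda(y)\ad_y$, which is standard.
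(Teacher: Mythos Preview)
Your proof is correct. Both you and the paper hinge on the same key observation --- that the invariance hypothesis on $P$ forces $\lambda(x)=F(P(x,\dots,x))$ to be constant along the canonical rack translations, i.e.\ $\lambda(\exp(\ad_u)y)=\lambda(y)$ --- but the packaging differs. The paper observes that $x\rhd y=(\lambda(x)x)\rhdl y=J(x)\rhdl y$ with $J(x)=\lambda(x)x$, checks from the invariance of $\lambda$ that $J(x\rhdl y)=x\rhdl J(y)$, and then appeals to a short general lemma: for any rack $(X,\rhd)$ and any map $J$ commuting with all left translations, $x\rhd_J y:=J(x)\rhd y$ is again a rack. Your argument instead verifies self-distributivity directly via the conjugation identity $\exp(A)\exp(B)\exp(-A)=\exp(\exp(A)B\exp(-A))$ together with $\ad_{\exp(\ad_u)v}=\exp(\ad_u)\ad_v\exp(-\ad_u)$. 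The paper's route is cleaner and isolates a reusable construction; yours is self-contained and makes the operator identity $\mathrm{L}_{x\rhd y}=\mathrm{L}_x\mathrm{L}_y\mathrm{L}_x^{-1}$ explicit. You also spell out the Leibniz bracket and analyticity claims, which the paper leaves implicit.
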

 
 This proposition shows that  a left Leibniz algebra might be associated to many non equivalent pointed Lie rack structures. For instance if one takes $F(0)=0$ in Proposition \ref{pr1}, the two pointed Lie rack structures
 \[ x\rhd_0 y=y\esp x\rhd_1 y=\exp(F(P(x,\ldots,x)))\ad_x)(y)  \]are two pointed Lie rack structures on $\h$ which are not equivalent (even locally near 0) and have the same left Leibniz algebra, namely, the abelian one. This contrasts with the theory of Lie groups where two Lie groups are locally equivalent near their unit elements if and only if they have the same Lie algebra. Moreover, this proposition motivates the study of linear Lie rack structures and gives
  a sense to the following definition.
 
 \begin{Def}\label{def} A left Leibniz algebra $(\h,\br)$ is called rigid if any 
 analytic	linear Lie rack structure $\rhd$ on $\h$ such that $\br_{\rhd}=\br$ is given by
 	\[ x\rhd y=\exp(F(P(x,\ldots,x)))\ad_x)(y), \]where $F:\R\too\R$ is analytic with $F(0)=1$ and  $P:\h\times\ldots\times\h\too\R$ is a symmetric multilinear $p$-form such that, for any $y,x_1\ldots,x_p\in\h$,
 	\[ \sum_{i=1}^pP(x_1,\ldots,[y,x_i],\ldots,x_p)=0. \]

 \end{Def}

\begin{remark} We have seen that the abelian left Leibniz algebra is not rigid. \end{remark}

  This paper is an introduction to the study of the rigidity of left Leibniz algebras.   Our approach was suggested to us by the one used in the study of linearization of Poisson structures (see \cite{conn}).  One of our main results is the following theorem.
  \begin{theo}\label{main} Let $V$ be a real finite dimensional vector space and $(A_{n,1})_{n\geq1}$ a sequence of $n+1$-multilinear maps symmetric in the $n$ first arguments. We suppose that the operation  $\rhd$ given by 
  	\[ x\rhd y=y+\sum_{n=1}^{\infty}A_{n,1}(x,\ldots,x,y) \]converges.
  	Then
  	  $\rhd$  is a  Lie rack structure on $V$  if and only if   for any $p,q\in\N^*$ and $x,y,z\in V$,
  	\begin{eqnarray}\label{eqm}
  		A_{p,1}(x,A_{q,1}(y,z))&=&\sum_{s_1+\ldots+s_q+k=p}A_{q,1}(A_{s_1,1}(x,y),\ldots,A_{s_q,1}(x,y),A_{k,1}(x,z)),
  	\end{eqnarray}where for sake of simplicity $A_{p,1}(x,y):=A_{p,1}(x,\ldots,x,y)$. 
  	
  	In particular, if $p=q=1$ we get that $\br:=A_{1,1}$ is a left Leibniz bracket which is actually the left Leibniz bracket associated to $(V,\rhd)$.
  	
  \end{theo}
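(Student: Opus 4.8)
The plan is to expand the rack identity \eqref{r1}, namely $x\rhd(y\rhd z)=(x\rhd y)\rhd(x\rhd z)$, in terms of the power series \eqref{eq4} and to match the multihomogeneous components. Write $L_x=\mathrm{Id}+\sum_{n\geq1}A_{n,1}(x,\ldots,x,\cdot)$ for the left translation. Since each $L_x$ is linear, the left-hand side is $L_x\circ L_y(z)$, while the right-hand side is $L_{L_x(y)}\circ L_x(z)$. First I would substitute the series for $L_y$ into $L_x$ on the left to get $x\rhd(y\rhd z)=\sum_{p,q\geq0}A_{p,1}\bigl(x,A_{q,1}(y,z)\bigr)$ with the convention $A_{0,1}(x,w)=w$. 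For the right-hand side, the key point is that $L_{L_x(y)}$ must be expanded as a series in the vector $L_x(y)=y+\sum_{s\geq1}A_{s,1}(x,y)$; plugging this into the $q$-fold symmetric slot of $A_{q,1}$ and using multilinearity produces exactly the sum $\sum_{s_1+\cdots+s_q+k=p}A_{q,1}\bigl(A_{s_1,1}(x,y),\ldots,A_{s_q,1}(x,y),A_{k,1}(x,z)\bigr)$ after one applies $L_x$ (the term $A_{k,1}(x,z)$) and collects all contributions of total $x$-degree $p$. Matching the coefficient of the component that is homogeneous of degree $p$ in $x$ and degree $q$ in $y$ (for fixed $z$) on both sides yields precisely \eqref{eqm}. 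Because the left translations are bijections and fix $0$, the remaining rack axioms \eqref{r2} are automatic for a linear operation of this form, so \eqref{r1} is the only constraint; this gives the equivalence.

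The second assertion is the case $p=q=1$ of \eqref{eqm}: the only decompositions of $1$ as $s_1+k$ with $s_1,k\geq0$ are $(s_1,k)=(1,0)$ and $(0,1)$, so the right-hand side reads $A_{1,1}(A_{1,1}(x,y),z)+A_{1,1}(y,A_{1,1}(x,z))$, while the left-hand side is $A_{1,1}(x,A_{1,1}(y,z))$. With $\br:=A_{1,1}$ this is exactly the left Leibniz identity \eqref{l1}. That $\br$ is the left Leibniz bracket associated to the pointed Lie rack $(V,\rhd)$ follows from Kinyon's construction recalled in the introduction: for the linear rack, $\Ad_x=L_x$, and differentiating $t\mapsto \Ad_{tx}(v)=v+tA_{1,1}(x,v)+O(t^2)$ at $t=0$ gives $[x,v]_{\rhd}=A_{1,1}(x,v)$.

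The main obstacle is purely bookkeeping: one must justify rearranging the (convergent) multiple series so that terms may be collected by bidegree in $(x,y)$, and one must carefully track how the symmetric $q$ arguments of $A_{q,1}$ distribute the $x$-powers, i.e. that expanding $A_{q,1}(w,\ldots,w,\cdot)$ at $w=y+\sum_s A_{s,1}(x,y)$ and picking out total $x$-degree $p$ in the first $q$ slots reproduces the unordered composition sum $\sum_{s_1+\cdots+s_q=p-k}$. By symmetry of $A_{q,1}$ in its first $q$ arguments, the multinomial coefficients that would naively appear are absorbed, and one obtains the clean form \eqref{eqm}. I would record this distribution step as a short lemma (expansion of a symmetric multilinear map evaluated on a series) and then the degree-matching is immediate. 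Convergence of all intermediate series is guaranteed by the hypothesis that $\rhd$ itself converges, together with standard estimates, so no further analytic input is needed.
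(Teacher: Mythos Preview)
Your proposal is correct and follows essentially the same approach as the paper: expand both sides of the self-distributivity identity $x\rhd(y\rhd z)=(x\rhd y)\rhd(x\rhd z)$ as double series in the $A_{n,1}$, then identify the component that is homogeneous of degree $p$ in $x$ and degree $q$ in $y$. The paper's proof is in fact more terse than yours---it simply writes out the two expansions and invokes bidegree matching without discussing the multinomial bookkeeping, the convergence rearrangement, or the other rack axioms---so your additional remarks on these points are a welcome elaboration rather than a departure.
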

  \begin{remark}
  When $p=1$ and $q\in\N^*$, the relation \eqref{eqm} becomes
  \begin{equation}\label{inv} \mathcal{L}_xA_{q,1}(y_1,\ldots,y_{q+1}):=[x,A_{q,1}(y_1,\ldots,y_{q+1})]
  -\sum_{i=1}^{q+1}A_{q,1}(y_1,\ldots,[x,y_i],\ldots,y_{q+1})=0. \end{equation}
  \end{remark}
 A multilinear map on a left Leibniz algebra satisfying \eqref{inv} will be called {\it invariant}. Thus Theorem \ref{main} reduces the study of analytic linear Lie rack structures to the study of the datum of a left Leibniz algebra with a sequence of invariant multilinear maps satisfying a sequence of multilinear equations. Even though equations \eqref{eqm} are complicated, we will see in this paper that they are far more easy to handle than the distributivity condition \eqref{r1}. In Section \ref{section2} we will give the proofs of Proposition \ref{pr1} and Theorem \ref{main} and we will show that there is a large class of non rigid left Leibniz algebras (see Corollary \ref{co}). On the other hand, when $q=1$ the equation \eqref{eqm} has a cohomological interpretation with respect to the cohomology of the left Leibniz algebra $(V,\br)$.  When $H^0=H^1=0$ we can deduce a refined expression of the $(A_{n,1})$ (see Theorem \ref{main2} in Section \ref{section4}). By using Theorems \ref{main} and \ref{main2} we will prove that  $\mathfrak{sl}_2(\R)$ and $\mathfrak{so}(3)$  are rigid  (see Sections \ref{section5}).  As the reader will see, the proof of the rigidity of $\mathfrak{sl}_2(\R)$ and $\mathfrak{so}(3)$ based on Theorems \ref{main} and \ref{main2} is quite difficult and has needed a deep understanding of the structure of these Lie algebras as a simple Lie algebras. We think that the study of the following conjecture can be a challenging mathematical problem.
 \begin{conjecture} Every simple Lie algebra is rigid in the sense of Definition \ref{def}.
 	
 \end{conjecture}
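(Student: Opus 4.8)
The plan is to combine the two structural theorems with the representation theory of a simple Lie algebra $\g$. By Theorem \ref{main}, fixing an analytic linear Lie rack structure $\rhd$ on $\g$ with $\br_{\rhd}=\br$ is the same as giving a sequence $(A_{n,1})_{n\ge1}$ of symmetric invariant multilinear maps with $A_{1,1}=\br$ (the Lie bracket) satisfying the coupling equations \eqref{eqm}. Since $\g$ is simple, its zeroth and first (left Leibniz) cohomology groups vanish: $H^0=0$ because the center of a simple Lie algebra is trivial, and $H^1=0$ by Whitehead's lemma, every derivation of a semisimple Lie algebra being inner. Hence the hypotheses of Theorem \ref{main2} are met, and I would start from the refined form of the $A_{n,1}$ it supplies. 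The whole problem then becomes: classify the invariant maps $A_{n,1}$, and show that the equations \eqref{eqm} force them to assemble into $\exp\big(F(P(x,\ldots,x))\ad_x\big)(y)$ for a single analytic $F$ with $F(0)=1$ and a single invariant form $P$, the exact shape demanded by Definition \ref{def}.

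The heart of the argument is invariant theory. An invariant $A_{n,1}$ is precisely a $\g$-equivariant symmetric map $S^n(\g)\otimes\g\too\g$. The first step is to build a spanning family from the two natural invariant constructions on any simple Lie algebra: the iterated brackets $y\mapsto\ad_x^k(y)$, which are invariant because $\ad$ acts by derivations, and multiplication by an invariant homogeneous polynomial $Q$ on $\g$ in the sense of Chevalley. I would prove that every invariant $A_{n,1}$ is a linear combination of the blocks $y\mapsto Q(x,\ldots,x)\ad_x^k(y)$ with $Q$ invariant homogeneous of degree $n-k$, the only relations among these blocks coming from the characteristic polynomial of the operator $\ad_x$, whose coefficients are again invariant polynomials. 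For $\mathfrak{sl}_2(\R)$ and $\mathfrak{so}(3)$ this is completely explicit: the invariant ring is generated by the single quadratic Casimir $P$, and $\ad_x$ obeys the cubic relation $\ad_x^3=-P(x,\ldots,x)\ad_x$, so the blocks collapse to $\{P^j\,\mathrm{id},\,P^j\ad_x,\,P^j\ad_x^2\}$ and the space of invariant $A_{n,1}$ is at most two-dimensional in each degree. A dimension count against the Clebsch--Gordan decomposition of $S^n(\g)\otimes\g$ then confirms that these blocks form a basis.

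With the invariant maps pinned down, the second step is to feed the ansatz $A_{n,1}(x,y)=\sum_{2j+k=n}a_{j,k}\,P(x,\ldots,x)^j\,\ad_x^k(y)$ into \eqref{eqm}. Using the derivation property of $\ad$, the invariance of $P$, and the cubic relation to reduce every power $\ad_x^{\ge 3}$, I expect the infinite system \eqref{eqm} to collapse to a recursion purely on the scalars $a_{j,k}$. The normalization $A_{1,1}=\br$ fixes the linear term and forces $F(0)=1$, and solving the recursion should show that the $a_{j,k}$ are exactly the Taylor coefficients produced by expanding $\exp\big(F(P(x,\ldots,x))\ad_x\big)$, so that $\rhd$ is rigid. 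Since on these two algebras every invariant symmetric form is a power of the Casimir, the form $P$ of Definition \ref{def} may be taken to be the Casimir itself, and the only surviving freedom is the analytic function $F$.

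The main obstacle is the invariant theory for a general simple Lie algebra. Outside the rank-one cases the invariant ring has several generators (higher Casimirs) and, more seriously, there exist extra equivariant \emph{symmetric} tensors, such as the totally symmetric $d$-tensor of $\mathfrak{sl}_n$ for $n\ge 3$, which produce invariant maps $A_{n,1}$ not of the single-$P$ form. Rigidity therefore cannot follow from the classification alone: one must show that the coupling equations \eqref{eqm}, across all degrees simultaneously, annihilate every such extra invariant and leave only the one-parameter family built from a single $P$. Establishing this vanishing uniformly in the Lie type, without the cubic collapse $\ad_x^3=-P\ad_x$ that makes the three-dimensional computation finite, is exactly the difficulty that keeps the statement at the level of a conjecture. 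My concrete route is to carry the two steps above out in full for $\mathfrak{sl}_2(\R)$ and $\mathfrak{so}(3)$ and to isolate the precise invariant-theoretic vanishing that a proof in general type would require.
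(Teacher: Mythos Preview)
The statement is a \emph{conjecture}; the paper does not prove it, so there is no proof to compare against. What the paper does prove is the rank-one case (Theorem \ref{h} for $\mathfrak{sl}_2(\R)$ and $\mathfrak{so}(3)$), and your outlined strategy for that case is essentially the paper's own: apply Theorem \ref{main2} (licensed by $H^0=H^1=0$), invoke the Chevalley restriction theorem to pin down the invariant symmetric forms, use the cubic collapse of $\ad_x$ coming from \eqref{magic} to reduce every $A_{n,1}$ to a two-parameter shape, and then solve the resulting scalar recursion. One minor difference of emphasis: the paper classifies the auxiliary forms $B_n$ produced by Theorem \ref{main2} (Corollary \ref{sym} gives $B_{2l}=0$ and $B_{2l+1}(x)=c_l\langle x,x\rangle^l x$) and only afterward deduces the shape of $A_{n,1}$, whereas you propose to classify the $A_{n,1}$ directly as sums of blocks $Q(x)\ad_x^k(y)$. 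Both routes land on the same recursion (Proposition \ref{pr}). Note also a sign: from \eqref{magic} one gets $\ad_x^3=\langle x,x\rangle\,\ad_x$, not $\ad_x^3=-P\,\ad_x$; the Killing form on $\mathfrak{sl}_2(\R)$ is indefinite, so the minus sign cannot be absorbed uniformly.

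Your diagnosis of the obstruction in higher rank is accurate and matches the paper's implicit reasoning for leaving the general statement open: beyond rank one the invariant ring has several generators and there are genuinely new equivariant symmetric tensors (e.g.\ the $d$-tensor of $\mathfrak{sl}_n$, $n\ge3$), so neither the classification step nor the collapse $\ad_x^3\in\R\,\ad_x$ survives, and one must show that the full system \eqref{eqm} kills the extra invariants. That is precisely the content of the conjecture, and your proposal does not supply a mechanism for it; it honestly identifies where a proof would have to begin.
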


 \section{Some classes of non rigid left Leibniz algebras,  proofs of Proposition \ref{pr1} and Theorem \ref{main}} \label{section2}

The proof of Proposition \ref{pr1} is a consequence of the following well-known result.
\begin{pr} \label{pr2} Let $(X,\rhd)$ be a rack and $J:X\too X$ a map such that, for any $x,y\in X$, $J(x\rhd y)=x\rhd J(y)$, i.e., $J\circ 
	\mathrm{L}_x=\mathrm{L}_x\circ J$ for any $x$. Then the operation
	\[ x\rhd_Jy=J(x)\rhd y \]defines a rack structure on $X$.
	
\end{pr}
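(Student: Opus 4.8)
The plan is to verify directly that $\rhd_J$ satisfies the two defining properties of a rack: that every left translation $\mathrm{L}^J_x:y\mapsto x\rhd_Jy$ is a bijection, and that $\rhd_J$ is self-distributive in the sense of \eqref{r1}. The bijectivity is immediate: by definition $x\rhd_Jy=J(x)\rhd y=\mathrm{L}_{J(x)}(y)$, so $\mathrm{L}^J_x=\mathrm{L}_{J(x)}$; since $(X,\rhd)$ is a rack, $\mathrm{L}_b$ is a bijection for every $b\in X$, and applying this with $b=J(x)$ shows $\mathrm{L}^J_x$ is a bijection for all $x\in X$.

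For self-distributivity, the first thing I would record is the consequence of the hypothesis $J\circ\mathrm{L}_a=\mathrm{L}_a\circ J$ obtained by specializing the first slot: since this holds for \emph{every} $a\in X$, taking $a=J(x)$ and evaluating at $y$ yields
\[ J\big(J(x)\rhd y\big)=J(x)\rhd J(y),\qquad\text{that is,}\qquad J(x\rhd_Jy)=J(x)\rhd J(y). \]
Then for $x,y,z\in X$ I would compute
\begin{align*}
x\rhd_J(y\rhd_Jz)&=J(x)\rhd\big(J(y)\rhd z\big)\\
&=\big(J(x)\rhd J(y)\big)\rhd\big(J(x)\rhd z\big)\\
&=J\big(J(x)\rhd y\big)\rhd\big(J(x)\rhd z\big)\\
&=J(x\rhd_Jy)\rhd(x\rhd_Jz)\\
&=(x\rhd_Jy)\rhd_J(x\rhd_Jz),
\end{align*}
where the first and last equalities are the definition of $\rhd_J$, the second is the self-distributivity \eqref{r1} of $\rhd$, and the third (and fourth) equality use the displayed identity above. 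This is exactly \eqref{r1} for $\rhd_J$, so $(X,\rhd_J)$ is a rack.

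There is essentially no hard step here; the only point requiring care is to invoke the intertwining hypothesis with argument $J(x)$ rather than $x$, since this is precisely what rewrites $J(x)\rhd J(y)$ as $J(x\rhd_Jy)$ and allows the right-hand side to reassemble as a $\rhd_J$-expression. I would also remark, with a view to the proof of Proposition \ref{pr1}, that pointedness is inherited: if $(X,e)$ is pointed in the sense of \eqref{r2} and $J(e)=e$, then $x\rhd_Je=J(x)\rhd e=e$ and $\mathrm{L}^J_e=\mathrm{L}_{J(e)}=\mathrm{L}_e=\mathrm{Id}_X$, so $(X,\rhd_J,e)$ is again a pointed rack.
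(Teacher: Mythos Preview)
Your proof is correct and follows essentially the same route as the paper's: both expand $x\rhd_J(y\rhd_Jz)$, apply the self-distributivity of $\rhd$, and then use the intertwining hypothesis with first argument $J(x)$ to rewrite $J(x)\rhd J(y)$ as $J(x\rhd_Jy)$. You are in fact slightly more thorough, since the paper only records the self-distributivity computation and omits the (immediate) verification that $\mathrm{L}^J_x=\mathrm{L}_{J(x)}$ is a bijection, as well as the pointedness remark.
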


\begin{proof} We have, for any $x,y,z\in X$,
	\begin{eqnarray*}
	x\rhd_J(y\rhd_J z)&=&J(x)\rhd(J(y)\rhd z)\\
	&=&(J(x)\rhd J(y))\rhd (J(x)\rhd z)\\
	&=&(J(J(x)\rhd y))\rhd (x\rhd_Jz)\\
	&=&(x\rhd_Jy)\rhd_J(x\rhd_Jz).\qedhere
	\end{eqnarray*}
	
\end{proof}

\noindent{\bf Proof of Proposition \ref{pr1}.}
\begin{proof} We consider the map $J:\h\too\h$ given by $J(x)=F(P(x,\ldots,x))x$. Since $P$ is invariant, we have $P(\exp(\ad_x)(y),\ldots,\exp(\ad_x)(y))=P(y,\ldots,y)$ and hence $J(x\rhdl y)=x\rhdl J(y)$ and one can apply Proposition \ref{pr2} to conclude.
	\end{proof}

 The following proposition shows that the class of non rigid left Leibniz algebras is large. Recall that if $\h$ is a left Leibniz algebra then its center $Z(\h)=\{a\in\h,[a,\h]=[\h,a]=0  \}$.
 
 \begin{pr}\label{pr22}
 	Let $(\h,\br)$ be a left Leibniz algebra. Choose a scalar product $\prs$ on $\h$,  $(a_1,b_1,\ldots,a_k,b_k)$ a family of vectors in $[\h,\h]^\perp\cap Z(\h)^\perp$,  $(z_1,\ldots,z_k)$ a family of vector in $ Z(\h)$ and $f_1,\ldots,f_k:\R\too\R$ with $f_j(0)=0$ for $j=1,\ldots,k$.  If $\rhdl$ is the canonical linear Lie rack operation on $\h$ then
 	\[ x\rhd y=x\rhdl y+\sum_{j=1}^k\langle y,b_j\rangle f_j(\langle x,a_j\rangle) z_j \] is a linear Lie rack operation pointed at 0. Moreover, if   $f'_j(0)=0$ for $j=1,\ldots,k$ then $\br_{\rhd}=\br$.\end{pr}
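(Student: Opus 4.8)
The plan is to verify the rack axioms \eqref{r1} and \eqref{r2} together with the bijectivity of the left translations by a direct computation, everything reducing to three elementary consequences of the chosen data. Write $\varphi_j(x)=f_j(\langle x,a_j\rangle)$ and $S_x(y)=\sum_{j=1}^k\langle y,b_j\rangle\varphi_j(x)z_j$, so that $x\rhd y=\exp(\ad_x)(y)+S_x(y)=x\rhdl y+S_x(y)$. I would first record: (i) each $z_j\in Z(\h)$, hence $\ad_{z_j}=0$, $\exp(\ad_x)(z_j)=z_j$ for all $x$, and $\ad_{u+z}=\ad_u$ whenever $z$ is central; (ii) for all $x,y\in\h$, $\exp(\ad_x)(y)-y=\sum_{n\ge1}\frac1{n!}\ad_x^n(y)\in[\h,\h]$, so since $a_j,b_j\in[\h,\h]^\perp$ one has $\langle\exp(\ad_x)(y),a_j\rangle=\langle y,a_j\rangle$ and $\langle\exp(\ad_x)(y),b_j\rangle=\langle y,b_j\rangle$; (iii) $\langle z_i,a_j\rangle=\langle z_i,b_j\rangle=0$ for all $i,j$, because $a_j,b_j\in Z(\h)^\perp$.

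Granting these, pointedness and linearity are immediate: each $\mathrm{L}_x$ is linear in $y$, $x\rhd0=0$, and $0\rhd y=y$ since $f_j(0)=0$. For the invertibility of $\mathrm{L}_x$, I would write $\mathrm{L}_x=T_x+S_x$ with $T_x=\exp(\ad_x)$, and check from (i)--(iii) that $S_x^2=0$ and $T_x^{\pm1}S_x=S_xT_x^{\pm1}=S_x$; a short formal manipulation then yields $\mathrm{L}_x^{-1}=T_x^{-1}-S_x=\exp(-\ad_x)-S_x$, so $\mathrm{L}_x\in\mathrm{GL}(\h)$.

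The heart of the proof is self-distributivity. Here I would use that $\exp(\ad_x)$ is an automorphism of $(\h,\br)$ (because $\ad_x$ is a derivation), hence $\exp(\ad_{\exp(\ad_x)(y)})=\exp(\ad_x)\exp(\ad_y)\exp(-\ad_x)$, exactly as for $\rhdl$ alone. Expanding $x\rhd(y\rhd z)$ and $(x\rhd y)\rhd(x\rhd z)$ and discarding, via (i)--(iii), every term orthogonal to the surviving vectors, I expect both sides to collapse to
\[ \exp(\ad_x)\exp(\ad_y)(z)+\sum_{j=1}^k\langle z,b_j\rangle\big(\varphi_j(x)+\varphi_j(y)\big)z_j. \]
Indeed, on the left, $\exp(\ad_x)(y\rhd z)=\exp(\ad_x)\exp(\ad_y)(z)+S_y(z)$ (by (i)) and $S_x(y\rhd z)=S_x(z)$ (using $\langle y\rhd z,b_i\rangle=\langle z,b_i\rangle$); on the right, with $u=x\rhd y$ and $w=x\rhd z$, one has $\ad_u=\ad_{\exp(\ad_x)(y)}$ and $\exp(-\ad_x)(w)=z+S_x(z)$, hence $\exp(\ad_u)(w)=\exp(\ad_x)\exp(\ad_y)(z)+S_x(z)$, while $S_u(w)=S_y(z)$ because $\langle u,a_i\rangle=\langle y,a_i\rangle$ and $\langle w,b_i\rangle=\langle z,b_i\rangle$. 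The two expressions agree, the first summand being just the self-distributivity of $\rhdl$. The only genuine labor is this bookkeeping of which terms survive the orthogonality relations, and that is the anticipated main obstacle; no convergence or structural difficulty arises beyond what is already settled for $\rhdl$, which is precisely the role of the constraints placed on $a_j,b_j,z_j$.

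Finally, for the bracket: since $\mathrm{L}_x$ is linear, $\Ad_x=\mathrm{L}_x$, so
\[ [u,v]_{\rhd}=\frac{d}{dt}_{|t=0}\mathrm{L}_{tu}(v)=\ad_u(v)+\sum_{j=1}^k f_j'(0)\langle u,a_j\rangle\langle v,b_j\rangle z_j=[u,v]+\sum_{j=1}^k f_j'(0)\langle u,a_j\rangle\langle v,b_j\rangle z_j, \]
which equals $[u,v]$ exactly when $f_j'(0)=0$ for every $j$; this gives the ``moreover'' part.
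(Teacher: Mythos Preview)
Your proof is correct and follows essentially the same route as the paper: record the three elementary facts coming from $z_j\in Z(\h)$ and $a_j,b_j\in[\h,\h]^\perp\cap Z(\h)^\perp$, then expand both sides of the self-distributivity identity and watch every correction term collapse to $\sum_j\langle z,b_j\rangle(\varphi_j(x)+\varphi_j(y))z_j$. Your argument is in fact more complete than the paper's, which verifies only \eqref{r1} and leaves pointedness, the bijectivity of $\mathrm{L}_x$, and the ``moreover'' claim about the bracket to the reader; your explicit inverse $\mathrm{L}_x^{-1}=\exp(-\ad_x)-S_x$ and derivative computation for $[\;,\;]_\rhd$ fill those gaps cleanly.
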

 	\begin{proof} Note first that for any $z\in Z(\h)$ and for any $x\in\h$, $x\rhd z=z$ and $z\rhd x=x$. Moreover, $\langle b_j,x\rhd y\rangle=\langle b_j,x\rhdl y\rangle=\langle b_j,y\rangle$ and $(x\rhd y)\rhdl z=(x\rhdl y)\rhdl z$ for any $x,y,z\in\h$. So, for any $x,y,z\in\h$,
 		\begin{eqnarray*}
 		x\rhd(y\rhd z)&=&x\rhd(y\rhdl z)+\sum_{j=1}^k\langle z,b_j\rangle f_j(\langle y,a_j\rangle) x\rhd z_j\\
 		&=&x\rhdl(y\rhdl z)+\sum_{j=1}^k\langle y \rhdl z,b_j\rangle f_j(\langle x,a_j\rangle) z_j+\sum_{j=1}^k\langle z,b_j\rangle f_j(\langle y,a_j\rangle)  z_j,\\
 		&=&x\rhdl(y\rhdl z)+\sum_{j=1}^k\langle  z,b_j\rangle f_j(\langle x,a_j\rangle) z_j+\sum_{j=1}^k\langle z,b_j\rangle f_j(\langle y,a_j\rangle)  z_j,\\
 		(x\rhd y)\rhd(x\rhd z)&=&(x\rhd y)\rhd(x\rhdl z)+\sum_{j=1}^k\langle z,b_j\rangle f_j(\langle x,a_j\rangle) z_j\\
 		&=&(x\rhd y)\rhdl(x\rhdl z)+\sum_{j=1}^k\langle x\rhdl z,b_j\rangle f_j(\langle x\rhd y,a_j\rangle) z_j+\sum_{j=1}^k\langle z,b_j\rangle f_j(\langle x,a_j\rangle) z_j\\
 		&=&(x\rhdl y)\rhdl(x\rhdl z)+\sum_{j=1}^k\langle  z,b_j\rangle f_j(\langle  y,a_j\rangle) z_j+\sum_{j=1}^k\langle z,b_j\rangle f_j(\langle x,a_j\rangle) z_j.
 		\end{eqnarray*}This proves the proposition.
 		\end{proof}

 \begin{co}\label{co}
 	\begin{enumerate}
 		\item Let $\h$ be a left Leibniz algebra which is a Lie algebra such that $[\h,\h]+ Z(\h)\not= \h$,  $Z(\h)\not=\{0\}$. Then $\h$ is not rigid.
 		
 		\item Let $\h$ be a left Leibniz algebra  such that $[\h,\h]+ Z(\h)\not= \h$ and $Z(\h)$ is not contained in $[\h,\h]$. Then $\h$ is not rigid.
 	\end{enumerate} 
 	
 \end{co}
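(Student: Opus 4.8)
The plan is to derive both statements as direct applications of Proposition \ref{pr22}, so the main task is to check that the hypotheses of that proposition can be met nontrivially under the stated assumptions, and then to exhibit an analytic linear Lie rack structure with $\br_{\rhd}=\br$ that is not of the canonical form appearing in Definition \ref{def}. First I would observe that in both cases the hypotheses guarantee two things: that $Z(\h)$ contains a nonzero vector, and that $[\h,\h]^\perp\cap Z(\h)^\perp$ contains a nonzero vector. For the first statement, $Z(\h)\neq\{0\}$ is assumed outright, and $[\h,\h]+Z(\h)\neq\h$ forces $([\h,\h]+Z(\h))^\perp=[\h,\h]^\perp\cap Z(\h)^\perp$ to be nonzero. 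For the second statement, $Z(\h)\not\subseteq[\h,\h]$ gives a nonzero element of $Z(\h)$ (in fact one lying outside $[\h,\h]$), and again $[\h,\h]+Z(\h)\neq\h$ gives a nonzero element of $[\h,\h]^\perp\cap Z(\h)^\perp$.

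Next I would fix a scalar product $\prs$ on $\h$, pick a nonzero $a_1=b_1\in[\h,\h]^\perp\cap Z(\h)^\perp$ (taking $k=1$, $a_1=b_1$ for simplicity; normalizing so $\langle a_1,a_1\rangle=1$), pick a nonzero $z_1\in Z(\h)$, and choose $f_1:\R\too\R$ analytic with $f_1(0)=f_1'(0)=0$ but $f_1\not\equiv0$ — for instance $f_1(t)=t^2$. By Proposition \ref{pr22}, the operation
\[ x\rhd y=x\rhdl y+\langle y,a_1\rangle f_1(\langle x,a_1\rangle)z_1 \]
is a linear Lie rack structure pointed at $0$ with $\br_{\rhd}=\br$, and since $\rhdl$ is analytic and $f_1$ is analytic, $\rhd$ is analytic. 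It remains to argue that this $\rhd$ is \emph{not} of the form $x\rhd y=\exp(F(P(x,\ldots,x))\ad_x)(y)$ with $F$ analytic, $F(0)=1$, and $P$ an invariant symmetric $p$-form. This is the crux. The key point is that for the canonical-type structures the left translation $\mathrm{L}_x$ is always an invertible linear map (it is $\exp$ of a linear operator), whereas here I would exhibit an $x$ with $\langle x,a_1\rangle\neq0$ for which $\mathrm{L}_x(y)=\exp(\ad_x)(y)+\langle y,a_1\rangle f_1(\langle x,a_1\rangle)z_1$ fails to be invertible — or, more robustly, I would compare the two expressions directly: any structure of the form $\exp(F(P(x))\ad_x)$ satisfies $x\rhd y=y$ whenever $\ad_x=0$, in particular for all $x\in Z(\h)$; but it also has the property that $x\rhd y$ depends on $x$ only through the pair $(\ad_x, P(x,\ldots,x))$, so for $x$ with $\ad_x=0$ one gets $x\rhd y=y$ identically, while the extra term $\langle y,a_1\rangle f_1(\langle x,a_1\rangle)z_1$ is generically nonzero for such $x$ — unless $a_1\in Z(\h)^\perp$ is automatically orthogonal to $\ker(\ad)$, which need not hold. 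I expect the cleanest route is the following: take $x=t\,a_1$ and compare the $z_1$-components; for the canonical-type structure, writing $P(x,\ldots,x)=P(a_1,\ldots,a_1)t^p$, the map $t\mapsto \exp(F(\cdot)\ad_{ta_1})(y)$ is real-analytic in $t$ and its expansion is controlled by powers of $\ad_{a_1}$, so the $z_1$-coefficient would have to be a specific analytic function determined by the Leibniz structure, whereas our $f_1$ is an arbitrary analytic function vanishing to second order. Choosing $f_1$ to disagree with every such admissible function (e.g. choosing it to also vanish to very high order, or to have a pole structure incompatible with $\exp$) produces the desired non-rigidity.

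The main obstacle, then, is precisely this last comparison: showing that the specific perturbation cannot be re-absorbed into a reparametrization of the canonical structure by some invariant $P$ and some $F$. I would handle it by a dimension/degree-of-freedom argument — the family of canonical-type structures is parametrized by the finite-dimensional space of invariant symmetric forms together with one analytic function $F$, and for a fixed invariant $P$ the resulting structure is rigidly determined; since $Z(\h)\cap([\h,\h]+Z(\h))^{\perp\perp}$-type considerations show the perturbation term genuinely enlarges the space of structures (it adds the independent datum of an arbitrary analytic $f_1$ attached to a vector $a_1$ that is \emph{not} in $[\h,\h]$, hence not ``seen'' by $\ad$), the two cannot coincide for a suitable choice of $f_1$. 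Assembling these observations gives that $\h$ admits an analytic linear Lie rack structure with $\br_{\rhd}=\br$ outside the class in Definition \ref{def}, i.e., $\h$ is not rigid, in both cases (1) and (2).
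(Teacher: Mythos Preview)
Your construction via Proposition~\ref{pr22} is correct and matches the paper's, but your verification that the resulting structure is not of the form in Definition~\ref{def} has a gap. Two of your proposed tests cannot work: $\mathrm{L}_x$ is bijective for any rack by definition (and here the rank-one perturbation of $\exp(\ad_x)$ is still invertible because $a_1\in Z(\h)^\perp$ forces $\langle z_1,a_1\rangle=0$); and the ``$x\in Z(\h)$'' test fails for the same reason, since the extra term $\langle y,a_1\rangle f_1(\langle x,a_1\rangle)z_1$ also vanishes when $x\in Z(\h)$. Your remaining ``degree-of-freedom'' sketch is not a proof.

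The paper's criteria are clean and you are missing them. For part~(2), observe that every structure of the form $\exp(F(P(x,\ldots,x))\ad_x)(y)$ satisfies $x\rhd y-y\in[\h,\h]$ (each term beyond $y$ lies in the image of some $\ad$), hence $x\rhd y-x\rhdl y\in[\h,\h]$; choosing $z_1\in Z(\h)\setminus[\h,\h]$ then gives an immediate contradiction for any $x,y$ with $\langle x,a_1\rangle,\langle y,a_1\rangle\neq0$. For part~(1) you never use the Lie-algebra hypothesis, and without it the $[\h,\h]$-test is unavailable (only $Z(\h)\neq\{0\}$ is assumed, not $Z(\h)\not\subseteq[\h,\h]$). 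The missing idea is that when $\h$ is a Lie algebra, $\ad_x(x)=[x,x]=0$, so every canonical-type structure is a quandle: $x\rhd x=\exp(F(P(x,\ldots,x))\ad_x)(x)=x$ for all $x$. With $f_1(t)=t^2$ and $a_1=b_1$, the constructed structure gives $a_1\rhd a_1=a_1+\langle a_1,a_1\rangle^3 z_1\neq a_1$, which finishes part~(1) in one line.
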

 
 \begin{proof}\begin{enumerate}
 		\item  By virtue of Definition \ref{def}, if $\h$ is rigid then any linear analytic rack structure $\rhd$ on $\h$ satisfies $x\rhd x=x$ for any $x\in\h$. Choose $z\in Z(\h)\setminus\{0\}$, a scalar product $\prs$ on $\h$ and $a\in[\h,\h]^\perp\cap Z(\h)^\perp$ with $a\not=0$. According to Proposition \ref{pr22}, the operation
 	\[ x\rhd y=x\rhdl y+\langle x,a\rangle^2\langle y,a\rangle z \]is an analytic linear Lie rack structure on $\h$ satisfying $\br_\rhd=\br$. However, this operation satisfies $a\rhd a=a+|a|^6z\not=a$ and hence $\h$ is not rigid. 	
 	
 	\item We have also that if $\h$ is rigid then    any linear analytic rack structure $\rhd$ on $\h$ satisfies $x\rhd y-x\rhdl y\in[\h,\h]$. We proceed as the first case and we consider the same Lie rack operation on $\h$  with $a\in Z(\h)$ and $a\notin[\h,\h]$ and we get a contradiction.\qedhere
 \end{enumerate}
 		\end{proof}
 		
 		\begin{remark} There is a large class of left Leibniz algebras satisfying the hypothesis of Corollary  \ref{co}, for instance, any 2-step nilpotent Lie algebra belongs to this class. 
 			
 		\end{remark}
 
 \noindent{\bf Proof of Theorem \ref{main}.} \begin{proof} Put $A_{0,1}(x,y)=y$.
 	We have
 	\begin{eqnarray*}
 		x\rhd(y\rhd z)&=&\sum_{n\in\N} A_{n,1}(x,\ldots,x,y\rhd z)\\
 		&=&\sum_{n,p\in\N} A_{n,1}(x,\ldots,x,A_{p,1}(y,\ldots,y,z)),
 	\end{eqnarray*}
 	\begin{eqnarray*}
 		(x\rhd y)\rhd(x\rhd z)&=&\sum_{n=0}^\infty A_{n,1}(x\rhd y,\ldots,x\rhd y,x\rhd z)\\
 		&=&\sum_{n,s_1,\ldots,s_n,k} A_{n,1}(A_{s_1,1}(x,y),\ldots,A_{s_n,1}(x,y),A_{k,1}(x,z)).
 	\end{eqnarray*}By identifying the homogeneous component of degree $n$ in $x$ and of degree $p$ in $y$ in both $x\rhd(y\rhd z)$ and $(x\rhd y)\rhd(x\rhd z)$ we get the desired relation.
 	\end{proof}
 	
 	The following result is an immediate and important consequence of Theorem \ref{main}. 
 	\begin{co}\label{co1} Let $(\h,\br)$ be a left Leibniz algebra and $\rhdl$ its canonical linear Lie rack operation. Then
 		\[ x\rhdl y=\sum_{n=0}^\infty A_{n,1}^0(x,\ldots,x,y) \]where
 		\[ A_{0,1}^0(x,y)=y\esp  A_{n,1}^0(x_1,\ldots,x_n,y)=\frac1{(n!)^2}\sum_{\sigma\in S_n}\ad_{x_{\sigma(1)}}\circ\ldots \circ\ad_{x_{\sigma(n)}}(y), \]and $S_n$ is the group of permutations of $\{1,\ldots,n\}$.
 		Furthermore, the $\left(A_{n,1}^0\right)_{n\in\N}$ satisfy the sequence of equations \eqref{eqm}.
 	\end{co}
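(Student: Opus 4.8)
The plan is to expand the exponential $\exp(\ad_x)$ as a power series in $x$ and to recognise its homogeneous components as the full polarizations of the maps $x\mapsto\frac1{n!}\ad_x^n(y)$; the second assertion is then a direct application of Theorem \ref{main}.

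First I would write, for fixed $x,y\in\h$,
\[ x\rhdl y=\exp(\ad_x)(y)=\sum_{n=0}^\infty\frac1{n!}\ad_x^n(y), \]
the series converging absolutely since $\ad_x$ is a linear endomorphism of the finite dimensional space $\h$. For each $n$ the assignment $x\mapsto\frac1{n!}\ad_x^n(y)$ is a homogeneous polynomial map of degree $n$ from $\h$ to $\h$, hence admits a unique totally symmetric $n$-linear companion $B_n$ (its full polarization) with $B_n(x,\ldots,x)=\frac1{n!}\ad_x^n(y)$. Substituting $x=t_1x_1+\cdots+t_nx_n$ in $\ad_x^n(y)$ expands it as $\sum_{(i_1,\ldots,i_n)}t_{i_1}\cdots t_{i_n}\,\ad_{x_{i_1}}\circ\cdots\circ\ad_{x_{i_n}}(y)$, so extracting the coefficient of $t_1\cdots t_n$ and dividing by $n!$ (the polarization formula) yields
\[ B_n(x_1,\ldots,x_n)=\frac1{(n!)^2}\sum_{\sigma\in S_n}\ad_{x_{\sigma(1)}}\circ\cdots\circ\ad_{x_{\sigma(n)}}(y)=A_{n,1}^0(x_1,\ldots,x_n,y), \]
with $y$ kept as a parameter in the last slot. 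Conversely, the right–hand side is manifestly $n$-linear and symmetric in $x_1,\ldots,x_n$, and putting all $x_i=x$ collapses the $n!$ summands to $\frac1{(n!)^2}\,n!\,\ad_x^n(y)=\frac1{n!}\ad_x^n(y)$, so the stated $A_{n,1}^0$ are exactly the $B_n$. Together with $A_{0,1}^0(x,y)=y$ this gives $x\rhdl y=\sum_{n\ge0}A_{n,1}^0(x,\ldots,x,y)$, i.e. the decomposition \eqref{eq4} of $\rhdl$, and by uniqueness of the polarization the $A_{n,1}^0$ are precisely the multilinear maps occurring there.

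Finally, since $\rhdl$ is a linear Lie rack structure on $\h$ — as recalled in the Introduction, $\exp(\ad_x)$ is an automorphism of $(\h,\br)$ because $\ad_x$ is a derivation, whence $\mathrm{L}_{x\rhdl y}=\mathrm{L}_x\circ\mathrm{L}_y\circ\mathrm{L}_x^{-1}$ — and since the defining series converges, Theorem \ref{main} applies verbatim to $\rhd=\rhdl$ and yields that $\left(A_{n,1}^0\right)_{n\in\N}$ satisfies the system \eqref{eqm}. I do not expect a genuine obstacle: the only points requiring care are the uniqueness of the full polarization (so that the explicit $A_{n,1}^0$ really are the coefficients to which Theorem \ref{main} refers) and the elementary combinatorial verification of the polarization formula, both of which are routine multilinear algebra.
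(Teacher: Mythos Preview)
Your proposal is correct and follows exactly the approach the paper intends: the paper states the corollary as ``an immediate and important consequence of Theorem \ref{main}'' with no further argument, and what you have written is precisely the natural filling-in of that claim --- identify the $A_{n,1}^0$ as the polarizations of $x\mapsto\frac1{n!}\ad_x^n(y)$, note that $\rhdl$ is a (convergent analytic) linear Lie rack, and invoke Theorem \ref{main}. Your polarization computation and the verification that $A_{n,1}^0(x,\ldots,x,y)=\frac1{n!}\ad_x^n(y)$ are both clean and correct.
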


 \section{ Analytic linear Lie racks structures over left Leibniz algebras with trivial 0-cohomology and 1-cohomology } \label{section4}
 
 In this section, we recall the definition of the cohomology of a left Leibniz algebra. We will give an important expression of the $A_{n,1}$ defining an analytic linear Lie rack structure on a left Leibniz algebra $\h$ when $H^0(\h)=H^1(\h)=0$.

 Let $(\h,\br)$ be a left Leibniz algebra. For any $n\geq0$, the operator
 $\de:Hom(\otimes^n\h,\h)\too Hom(\otimes^{n+1}\h,\h)$  given by
 	\begin{eqnarray*} \de(\om)(x_0,\ldots,x_n)&=&\sum_{i=0}^{n-1}[x_i,\om(x_0,\ldots,\hat{x_i},\ldots,x_n)]
 	+(-1)^{n-1}[\om(x_0,\ldots,x_{n-1}),x_n]\\&&+\sum_{i<j}(-1)^{i+1}\om(x_0,\ldots,\hat{x}_i,\ldots,x_{j-1},[x_i,x_j],
 	x_{j+1},\ldots,x_n), \end{eqnarray*} satisfies $\de^2=0$ and then defines a cohomology $H^p(\h)$ for $p\in\N$. For any $x\in\h$ and $F,G:\h\too\h$, we have
 \[ \de(x)(m)=-[x,m] \esp \de(F)(y,z)=[y,F(z)]+[F(y),z]-F([y,z])\]and one can see easily that
 \begin{equation}\label{comp} \de(F\circ G)(y,z)=\de(F)(y,G(z))+\de(F)(G(y),z)+F\circ\de(G)(y,z)-[F(y), G(z)]-[G(y),F(z)]. \end{equation}

 \begin{remark}\label{remark} Let $(\h,\br)$ be a left Leibniz algebra which is a Lie algebra. The cohomology of $\h$ as left Leibniz algebra is different from its cohomology as a Lie algebra, however  $H^0$ and $H^1$ are the same for both  cohomologies.
 	
 \end{remark}
 
 Now let's take a closer look to equations \eqref{eqm} when $q=1$. 
 Let $(\h,\br)$ be a left Leibniz algebra and $(A_{n,1})_{p\in\N}$ a sequence of  $(n+1)$-multilinear maps on $\h$ with  values in $\h$  symmetric in the $n$ first arguments and such that  $A_{0,1}(x,y)=y$ and $A_{1,1}(x,y)=[x,y]$ . For  sake of simplicity we write $A_{n,1}(x,y)=A_{n,1}(x,\ldots,x,y)$.
 
 Equation \eqref{eqm} for $q=1$ can be written for any $x,y,z\in\h$,
 \[ A_{p,1}(x,[y,z])=[y,A_{p,1}(x,z)]+[A_{p,1}(x,y),z]+\sum_{r=1}^{p-1}[A_{r,1}(x,y),A_{p-r,1}(x,z)]. \]Thus
 \begin{equation}\label{eqc}
 \de(i_x\ldots i_xA_{p,1})(y,z)=-\sum_{r=1}^{p-1}[A_{r,1}(x,y),A_{p-r,1}(x,z)],
 \end{equation}where $i_x\ldots i_xA_{p,1}:\h\too\h$, $y\mapsto A_{p,1}(x,\ldots,x,y)$.
 
 On the other hand, the sequence $(A_{n,1}^0)_{n\in\N}$ defining the canonical linear Lie rack structure of $\h$ (see Corollary \ref{co1}) satisfies \eqref{eqm} and hence
 \begin{equation}\label{eqc1}
 \de(i_x\ldots i_xA_{p,1}^0)(y,z)=-\sum_{r=1}^{p-1}[A_{r,1}^0(x,y),A_{p-r,1}^0(x,z)].
 \end{equation}If  $p=2$,  since $A_{0,1}=A_{0,1}^0$ and $A_{1,1}=A_{1,1}^0$,  Equations \eqref{eqc} and \eqref{eqc1} implies that, for any $x\in\h$,
 \[ \de(i_xi_xA_{2,1}-i_xi_xA_{2,1}^0)=0. \]Since $A_{2,1}$ and $A_{2,1}^0$ are symmetric in the two first arguments this is equivalent to
 \[ \de(i_xi_yA_{2,1}-i_xi_yA_{2,1}^0)=0,\quad\mbox{for any}\;x,y\in\h. \]
  This is a cohomological equation and if $H^1(\h)=0$ then there exists $B_2:\h\times\h\too\h$  such that, for any $x,y,z\in\h$,
 \begin{equation} \label{h1} A_{2,1}(x,y,z)=A_{2,1}^0(x,y,z)+[B_2(x,y),z].\end{equation} Moreover, if $H^0(\h)=0$ then $B_2$ is unique and symmetric and one can check easily that $A_{2,1}$ is invariant if and only if $B_2$ is invariant.

 We have  triggered an induction process and, under the same hypothesis, the $(A_{p,1})_{p\geq2}$ satisfy a similar formula as \eqref{h1}. This is the purpose of the following theorem.

 \begin{theo}\label{main2} Let $(\h,[\;,\;])$ be a left Leibniz algebra such that $H^0(\h)=H^1(\h)=0$. Let $(A_{n,1})_{n\geq0}$ be a sequence where $A_{0,1}(x,y)=y$ and $A_{1,1}(x,y)=[x,y]$ and, for any $n\geq2$, $A_{n,1}:\h\times\ldots\times \h\too\h$ is multilinear invariant and symmetric in the $n$ first arguments.  We suppose that the $A_{n,1}$ satisfy \eqref{eqc}. Then
 	there exists a unique sequence $(B_n)_{n\geq2}$ of invariant symmetric multilinear maps $B_n:\h\times\ldots\times\h\too\h$ such that, for any $x,y\in\h$,
 	\begin{equation}\label{eq} A_{n,1}(x,y)=A_{n,1}^0(x,y)+\sum_{\begin{array}{c}
 		1\leq k\leq\left[\frac{n}2\right]\\  s=l_1+\ldots+l_{k}\leq n
 		\end{array}	} A_{k,1}^0(B_{l_1}(x),\ldots,B_{l_k}(x),A_{n-s,1}^0(x,y)), \end{equation}where $A_{p,1}(x,y)=A_{p,1}(x,\ldots,x,y)$ and $B_l(x)=B_l(x,\ldots,x)$.
 	
 \end{theo}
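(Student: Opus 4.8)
\noindent\emph{Idea of proof.} The plan is to package the whole system \eqref{eq} into a single generating--function identity and then to construct its data one homogeneity degree at a time, using that $H^0(\h)=H^1(\h)=0$. Set $\beta(x):=\sum_{l\geq2}B_l(x,\ldots,x)$. The first step is the elementary observation that, read degree by degree in $x$ (equivalently, after $x\mapsto tx$ in $\h[t]/(t^{N+1})$ for $N$ arbitrary), the family \eqref{eq} for all $n\geq2$ is \emph{equivalent} to the single identity
\[ \sum_{n\geq0}A_{n,1}(x,\ldots,x,y)\;=\;\exp\!\left(\ad_{\beta(x)}\right)\!\left(x\rhdl y\right)\;=\;\exp\!\left(\ad_{\beta(x)}\right)\exp(\ad_x)(y), \]
where we used $x\rhdl y=\sum_n A^0_{n,1}(x,\ldots,x,y)=\exp(\ad_x)(y)$ from Corollary \ref{co1}. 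This equivalence is a purely combinatorial check: expand both exponentials, insert the explicit formula for $A^0_{k,1}$ from Corollary \ref{co1}, and collapse the sum over $S_k$ against the sum over ordered $k$--tuples $(l_1,\ldots,l_k)$ by the bijective substitution $m_i=l_{\sigma(i)}$; the weights $1/(k!)^2$ and the symmetrizations then recombine into exactly the degree--$n$ part of the right--hand side of \eqref{eq}.

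\medskip
\noindent The second step is a lemma valid for \emph{any} sequence $(B_l)_{l\geq2}$ of symmetric multilinear maps $\h\times\cdots\times\h\too\h$: if $A^{(B)}_{n,1}$ denotes the degree--$n$ (in $x$) component of $\exp(\ad_{\beta(x)})\exp(\ad_x)(y)$, then $(A^{(B)}_{n,1})_{n\geq0}$ automatically satisfies \eqref{eqc}, and if each $B_l$ is invariant then each $A^{(B)}_{n,1}$ is invariant. The point is that $\ad_u$ is a derivation of $(\h,\br)$ for every $u\in\h$ (this is the left Leibniz axiom \eqref{l1}); hence, read in $\h[t]/(t^{N+1})$ after $x\mapsto tx$, where both are exponentials of nilpotent operators, $\exp(\ad_{\beta(x)})$ and $\exp(\ad_x)$ are algebra automorphisms, and so is their composite $T_x$. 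Writing out $T_x([y,z])=[T_xy,T_xz]$ and comparing homogeneity degrees in $x$ yields precisely the system \eqref{eqc} for $(A^{(B)}_{n,1})$. Invariance is preserved under substitution of invariant maps into invariant maps (a one--line computation from multilinearity and the definition of $\mathcal{L}_x$), and $A^0_{k,1}$ is invariant by Corollary \ref{co1} since it satisfies \eqref{inv}; so invariance of the $B_l$ propagates to the $A^{(B)}_{n,1}$. Finally, by construction $A^{(B)}_{n,1}$ depends only on $B_2,\ldots,B_n$, and $B_n$ enters it only through the single summand $[\,B_n(x),y\,]$.

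\medskip
\noindent With these two facts the proof is an induction on $n$. The case $n=2$ is the one already carried out in the text (there $A_{2,1}-A^0_{2,1}$ is a $1$--cocycle by \eqref{eqc} and \eqref{eqc1}, $H^1(\h)=0$ produces $B_2$, and $H^0(\h)=0$ makes it unique and symmetric). For the step, assume invariant symmetric $B_2,\ldots,B_{n-1}$ have been found so that \eqref{eq} holds for $A_{2,1},\ldots,A_{n-1,1}$; equivalently, letting $\tilde A_{m,1}$ be the degree--$m$ component of $\exp(\ad_{\gamma(x)})\exp(\ad_x)(y)$ with $\gamma(x):=\sum_{l=2}^{n-1}B_l(x,\ldots,x)$, one has $\tilde A_{m,1}=A_{m,1}$ for all $m\leq n-1$. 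Since the right--hand side of \eqref{eqc} in degree $n$ involves only $A_{1,1},\ldots,A_{n-1,1}$, it is the same for $(A_{m,1})$ and for $(\tilde A_{m,1})$; subtracting the degree--$n$ instances of \eqref{eqc} for these two sequences gives $\de\bigl(i_x\ldots i_x(A_{n,1}-\tilde A_{n,1})\bigr)=0$ for each $x$, i.e.\ $y\mapsto(A_{n,1}-\tilde A_{n,1})(x,\ldots,x,y)$ is a $1$--cocycle of $(\h,\br)$. As $H^1(\h)=0$ it is a coboundary, $(A_{n,1}-\tilde A_{n,1})(x,\ldots,x,y)=[\,b(x),y\,]$ with $b(x)\in\h$; and $H^0(\h)=0$ makes $b(x)$ unique, so composing with a fixed linear left inverse of the injective map $u\mapsto[u,\cdot\,]$ shows $x\mapsto b(x)$ is a homogeneous polynomial map of degree $n$, whose full polarization $B_n$ is the desired symmetric $n$--linear map. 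Invariance of $A_{n,1}$ and of $\tilde A_{n,1}$, together with \eqref{l1} and $H^0(\h)=0$, then forces $B_n$ to be invariant. Finally $A_{n,1}=\tilde A_{n,1}+[\,B_n(x),\cdot\,]$ is \eqref{eq} in degree $n$, and the uniqueness of $B_n$ at every stage yields the uniqueness of the whole sequence $(B_n)_{n\geq2}$.

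\medskip
\noindent I expect the main obstacle to be the first combinatorial identity — keeping the $1/(k!)^2$ weights, the $S_k$--symmetrizations and the multi--indices under control so that the iterated exponential truly reproduces \eqref{eq} term by term — together with the routine but non--automatic passage from the pointwise map $b(x)$ to a genuine symmetric multilinear $B_n$, which is where $H^0(\h)=0$ and the characteristic--zero hypothesis enter. Once the generating--function reformulation is in place, the remainder is essentially the observation that composites of exponentials of inner derivations are automorphisms, plus the standard cocycle/coboundary dichotomy.
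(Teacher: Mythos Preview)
Your approach is correct and takes a genuinely different, more conceptual route than the paper's.

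The paper proves the inductive step by direct computation: it expands $\de(i_x\ldots i_xA_{n+1,1})$ via the induction hypothesis into three blocks $S,T,U$, computes $\de$ of each summand of the candidate right-hand side $R(x)$ using the explicit formulas of Proposition~\ref{prQ}, and then constructs an explicit bijection between index sets (the $Q_1=Q_2$ argument) to show $S+T+U=\sum Q(k,s)$. This occupies about two pages of bookkeeping. Your reformulation $\sum_n A_{n,1}(x,y)=\exp(\ad_{\beta(x)})\exp(\ad_x)(y)$ bypasses all of this: since each $\ad_u$ is a derivation of $(\h,\br)$ by the left Leibniz axiom, both exponentials are (formal) automorphisms and so is their composite $T_x$; reading $T_x[y,z]=[T_xy,T_xz]$ degree by degree in $x$ is \emph{exactly} \eqref{eqc} for the reference sequence $(\tilde A_{m,1})$. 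The cocycle condition on $A_{n,1}-\tilde A_{n,1}$ then follows by subtraction, and the remainder (polarization, invariance, uniqueness via $H^0=0$) is the same as in the paper. What you gain is brevity and a structural explanation of why the ansatz \eqref{eq} is the right one; what the paper's explicit matching retains is term-by-term control, which could be useful for finer analyses.

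One minor remark: the combinatorial check you flag as the main obstacle is lighter than you suggest. Since $A^0_{k,1}(u,\ldots,u,z)=\tfrac{1}{k!}\ad_u^k(z)$, expanding $\tfrac{1}{k!}\ad_{\beta(x)}^k$ by multilinearity in $\beta(x)=\sum_{l\geq2} B_l(x)$ gives $\tfrac{1}{k!}\sum_{m_1,\ldots,m_k\geq2}\ad_{B_{m_1}(x)}\cdots\ad_{B_{m_k}(x)}$; on the other side, summing $A^0_{k,1}(B_{l_1}(x),\ldots,B_{l_k}(x),\cdot)$ over ordered tuples $(l_1,\ldots,l_k)$ gives $\tfrac{1}{(k!)^2}\sum_{l}\sum_{\sigma\in S_k}\ad_{B_{l_{\sigma(1)}}(x)}\cdots\ad_{B_{l_{\sigma(k)}}(x)}$, and the substitution $m_i=l_{\sigma(i)}$ collapses the $k!$ values of $\sigma$ to match. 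No delicate weight-tracking is needed.
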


 \begin{remark} Formula \eqref{eq} deserves some explications. As any formula depending inductively on $n$, to find the general form one needs to check it for the first values of $n$ and it is what we have done. There are the formulas we found directly and which helped us to establish the expression \eqref{eq}.
 	\begin{eqnarray*}
 		A_{3,1}(x,y)&=&
 		A_{3,1}^0(x,y)+[B_2(x),A_{1,1}^0(x,y)]+[B_3(x),A_{0,1}^0(x,y)]\\
 		&=&A_{3,1}^0(x,y)+A_{1,1}^0(B_2(x),A_{1,1}^0(x,y))+A_{1,1}^0(B_3(x),A_{0,1}^0(x,y))
 		,\\
 		A_{4,1}(x,y)&=&A_{4,1}^0(x,y)+[B_4(x),A_{0,1}^0(x,y)]+[B_3(x),A_{1,1}^0(x,y)]+
 		[B_2(x),A_{2,1}^0(x,y)]\\&&+\frac12[B_2(x),[B_2(x),A_{0,1}^0(x,y)]],\\
 		A_{5,1}(x,y)&=&A_{5,1}^0(x,y)+[B_5(x),y]+[B_4(x),A_{1,1}^0(x,y)]+[B_3(x),A_{2,1}^0(x,y)]+
 		[B_2(x),A_{3,1}^0(x,y)]\\
 		&&\frac12\left( [B_2(x),[B_3(x),y]]+[B_3(x),[B_2(x),y]]   \right)+\frac12[B_2(x),[B_2(x),A_{1,1}^0(x,y)]].
 	\end{eqnarray*}

 \end{remark}

 To prove Theorem \ref{main2}, we will proceed by induction. The proof is rather technical and needs some preliminary formulas.

 Fix $n\geq2$ and $x\in\h$. For any $1\leq k\leq\left[ \frac{n+1}2 \right]$ and
 $s=l_1+\ldots+l_k\leq n+1$, in the proof of Theorem \ref{main2}, we will need to compute  $\de(F_k\circ G_{s})$ where
  $F_{k},G_{s}:\h\too\h$ are given by
 \[ F_{k}(y)= A_{k,1}^0(B_{l_1}(x),\ldots,B_{l_k}(x),y),\quad G_{s}(y)=A_{n+1-s,1}^0(x,y).  \]
 This is straightforward from \eqref{comp} and the formula
 \[ \de(i_{x_1}\ldots i_{x_k}A_{k,1}^0)(y,z)=-\frac1{k!}\sum_{p=1}^{k-1}
 \sum_{\sigma\in S_k}[A_{p,1}^0(x_{\sigma(1)},\ldots,x_{\sigma(p)},y),
 A_{k-p,1}^0(x_{\sigma(p+1)},\ldots,x_{\sigma(k)},z)] \]whose polar form is \eqref{eqc1}. We use here the well-know fact that two symmetric multilinear forms are equal if and only if their polar forms are equal. For sake of simplicity put
  $Q(k,s)=\de(F_k\circ G_{s})(y,z)$.

 \begin{pr}\label{prQ} We have
 	{\small
 		\begin{eqnarray*}  
 			Q(1,s)&=&-\sum_{r=0}^{n-s}[A_{1,1}^0(B_s(x),A_{r,1}^0(x,y)),A_{n+1-s-r,1}^0(x,z)]
 			-\sum_{r=1}^{n+1-s}[A_{r,1}^0(x,y),A_{1,1}^0(B_s(x),A_{n+1-s-r,1}^0(x,z))],
 			\quad s\leq n,\\
 			Q(k,n+1)
 			&=&-\frac1{k!}\sum_{h=1}^{k-1}
 			\sum_{\sigma\in S_k}[A_{h,1}^0(B_{l_{\sigma(1)}}(x),\ldots,B_{l_{\sigma(h)}}(x),y),
 			A_{k-h,1}^0(B_{l_{\sigma(h+1)}}(x),\ldots,B_{l_{\sigma(k)}}(x),z)],
 			\quad k\geq2,\\
 			Q(k,s)
 			&=&-\frac1{k!}\sum_{r=0}^{n+1-s}
 			\sum_{p=1}^{k-1}\sum_{\sigma\in S_k}
 			[A_{p,1}^0(B_{l_{\sigma(1)}}(x),\ldots,B_{l_{\sigma(p)}}(x),A_{r,1}^0(x,y)),
 			A_{k-p,1}^0(B_{l_{\sigma(p+1)}}(x),\ldots,B_{l_{\sigma(k)}}(x),A_{n+1-s-r,1}^0(x,z))],\\
 			&&-\sum_{r=1}^{n+1-s}[A_{r,1}^0(x,y),
 			A_{k,1}^0(B_{l_{1}}(x),\ldots,B_{l_{k}}(x),A_{n+1-s-r,1}^0(x,z))]
 			-\sum_{r=0}^{n-s}
 			[A_{k,1}^0(B_{l_{1}}(x),\ldots,B_{l_{k}}(x),A_{r,1}^0(x,y)),
 			A_{n+1-s-r,1}^0(x,z)],\\&&\quad \quad k\geq2,s\leq n.
 		\end{eqnarray*}}
 	\end{pr}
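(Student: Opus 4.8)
The plan is to evaluate $Q(k,s)=\de(F_k\circ G_s)(y,z)$ directly from the composition formula \eqref{comp}, using only two inputs about the canonical family: the coboundary formula for $\de(i_{x_1}\ldots i_{x_k}A_{k,1}^0)$ displayed just before the statement (whose collapse $x_1=\cdots=x_k=x$ is \eqref{eqc1}), and the fact that a symmetric multilinear map is determined by its restriction to the diagonal $x_1=\cdots=x_k$. First I would record the two coboundaries involved. Since $G_s=i_x\cdots i_xA_{n+1-s,1}^0$, equation \eqref{eqc1} with $p=n+1-s$ gives
\[ \de(G_s)(y,z)=-\sum_{r=1}^{n-s}[A_{r,1}^0(x,y),A_{n+1-s-r,1}^0(x,z)]\qquad (s\le n), \]
while $G_{n+1}=A_{0,1}^0(x,\cdot)=\mathrm{Id}$; and since $F_k=i_{B_{l_1}(x)}\cdots i_{B_{l_k}(x)}A_{k,1}^0$, the displayed coboundary formula evaluated at the vectors $B_{l_i}(x)$ is exactly the asserted expression for $\de(F_k)$, which in particular vanishes for $k=1$, the sum over $1\le p\le k-1$ being empty.

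The case $s=n+1$ is then immediate: $G_{n+1}=\mathrm{Id}$ gives $F_k\circ G_{n+1}=F_k$, hence $Q(k,n+1)=\de(F_k)(y,z)$, which is the second formula in the statement. For $s\le n$ I would plug $F=F_k$ and $G=G_s$ into \eqref{comp}, so that
\[ Q(k,s)=\de(F_k)(y,G_s(z))+\de(F_k)(G_s(y),z)+F_k(\de(G_s)(y,z))-[F_k(y),G_s(z)]-[G_s(y),F_k(z)]. \]
The four outer terms are immediately read off from the first paragraph, with $G_s(y)=A_{n+1-s,1}^0(x,y)$ and $F_k(y)=A_{k,1}^0(B_{l_1}(x),\ldots,B_{l_k}(x),y)$.

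For the middle term I would insert $\de(G_s)(y,z)$ and expand $F_k$ of a bracket. Here I use the rearrangement of \eqref{eqc1} obtained from $\de(F)(u,v)=[u,F(v)]+[F(u),v]-F([u,v])$, in its polarized form in the $k$ copies of $x$:
\[ A_{k,1}^0(w_1,\ldots,w_k,[u,v])=[u,A_{k,1}^0(w_1,\ldots,w_k,v)]+[A_{k,1}^0(w_1,\ldots,w_k,u),v]+\frac1{k!}\sum_{p=1}^{k-1}\sum_{\sigma\in S_k}[A_{p,1}^0(w_{\sigma(1)},\ldots,w_{\sigma(p)},u),A_{k-p,1}^0(w_{\sigma(p+1)},\ldots,w_{\sigma(k)},v)]. \]
Applied with $w_i=B_{l_i}(x)$, $u=A_{r,1}^0(x,y)$, $v=A_{n+1-s-r,1}^0(x,z)$, and summed over $1\le r\le n-s$, this splits $F_k(\de(G_s)(y,z))$ into three families of brackets, each already of the shape of one of the three sums in the third formula of the statement but missing one endpoint of the $r$-range.

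It then only remains to reindex. Using $A_{0,1}^0(x,y)=y$ and $A_{0,1}^0(x,z)=z$: the bracket $-[F_k(y),G_s(z)]$ is precisely the missing $r=0$ summand of the ``third family'', $-[G_s(y),F_k(z)]$ the missing $r=n+1-s$ summand of the ``second family'', and $\de(F_k)(y,G_s(z))$, $\de(F_k)(G_s(y),z)$ the missing $r=0$ and $r=n+1-s$ summands of the triple-indexed ``first family''. Absorbing them extends the three ranges to $0\le r\le n+1-s$, $1\le r\le n+1-s$ and $0\le r\le n-s$ respectively, which is the third formula. The first formula, $k=1$, follows from the same computation on noting that $\de(F_1)=0$ and the triple-indexed family is empty, so that only $F_1(\de(G_s)(y,z))$ — expanded by the Leibniz rule for $\ad_{B_s(x)}$, the $k=1$ instance of the identity displayed above — together with the two boundary brackets survives, reassembling into the two sums of the first formula. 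I do not anticipate a conceptual obstacle: the whole difficulty is combinatorial bookkeeping, namely tracking the index $r$ and the symmetrization over $S_k$ accurately enough to land on exactly these summation ranges and neither lose nor double-count an endpoint term.
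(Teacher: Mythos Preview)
Your proposal is correct and follows exactly the route the paper indicates: the paper does not give a detailed proof of this proposition at all, merely stating that the formulas are ``straightforward from \eqref{comp} and the formula'' for $\de(i_{x_1}\cdots i_{x_k}A_{k,1}^0)$. Your write-up carries out precisely that computation---expanding $\de(F_k\circ G_s)$ via \eqref{comp}, rewriting $F_k([u,v])$ using $F_k([u,v])=[u,F_k(v)]+[F_k(u),v]-\de(F_k)(u,v)$, and then absorbing the four boundary terms as the missing endpoints $r=0$ and $r=n+1-s$ of the three $r$-indexed families---and your bookkeeping of the summation ranges is accurate.
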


\noindent {\bf Proof of Theorem \ref{main2}.}
 \begin{proof} We prove the formula by induction on $n$. For $n=2$, the formula has been established in \eqref{h1}.
 	
 	Suppose that there exists a family  $(B_2,\ldots,B_n)$ where $B_k$ is an invariant symmetric $k$-from on $\h$ with  values in $\h$  such that for any $2\leq r\leq n$,
 	\begin{eqnarray}\label{ih} A_{r,1}(x,y)&=&A_{r,1}^0(x,y)+\sum_{\begin{array}{c}
 		1\leq k\leq\left[\frac{r}2\right]\\  l_1+\ldots+l_{k}=s\leq r
 		\end{array}	} A_{k,1}^0(B_{l_1}(x),\ldots,B_{l_k}(x),A_{r-s,1}^0(x,y)). \end{eqnarray}	
 	We look for $B_{n+1}:\h\times\ldots\times \h\too\h$ symmetric and invariant such that
 	\begin{eqnarray*} A_{n+1,1}(x,y)&=&A_{n+1,1}^0(x,y)+\sum_{\begin{array}{c}1\leq k\leq\left[\frac{n+1}2\right]\\
 		  l_1+\ldots+l_{k}=s\leq n+1
 		\end{array}	} A_{k,1}^0(B_{l_1}(x),\ldots,B_{l_k}(x),A_{n+1-s,1}^0(x,y))\\
 		&=&[B_{n+1}(x),y]+A_{n+1,1}^0(x,y)+\sum_{\begin{array}{c}1\leq k\leq\left[\frac{n+1}2\right]\\
 				l_1+\ldots+l_{k}=s\leq n+1\\l_1,\ldots,l_k\leq n
 			\end{array}	} A_{k,1}^0(B_{l_1}(x),\ldots,B_{l_k}(x),A_{n+1-s,1}^0(x,y))\\
 			&=&[B_{n+1}(x),y]+R(x)(y),
 		\end{eqnarray*}where $R(x)$ depends only on $(B_2,\ldots,B_n)$.
 	
 	The idea of the proof is to show that, for any $x\in\h$, $\de(D(x))=0$ where $D(x):\h\too\h$ is given by $D(x)(y)=A_{n+1,1}(x,y)-R(x)(y)$.
 	Then since $H^0(\h)=H^1(\h)=0$ there exists a unique $B_{n+1}$ satisfying $D(x)(y)=[B_{n+1}(x),y]$. By using the fact that $D(x)$ is the polar form of a symmetric form and $H^0(\h)=0$ one can see that $B_{n+1}(x)$ is the polar form of a symmetric form which is also invariant.

 	Let us compute now $\de(D(x))$.	According to  \eqref{eqc}, we have
 		\begin{eqnarray*}
 			\de(i_{x}\ldots i_{x}A_{n+1,1})(y,z)&=&-\sum_{r=1}^n[A_{r,1}(x,\ldots,x,y),A_{n+1-r,1}(x,\ldots,x,z)].
 		\end{eqnarray*} By expanding this relation using our induction hypothesis given in \eqref{ih}, we get that
 		\[ \de(i_{x}\ldots i_{x}A_{n+1,1})(y,z)=\de(i_{x}\ldots i_{x}A_{n+1,1}^0)(y,z)+S+T+U, \]where
 		
 		{\footnotesize	\begin{eqnarray*}
 				S
 	&=&-\sum_{r=1}^{n-1}\sum_{\begin{array}{c}1\leq k\leq\left[\frac{n+1-r}2\right]\\
 			  s=l_1+\ldots+l_{k}\leq n+1-r
 		\end{array}}[A_{r,1}^0(x,\ldots,x,y),A_{k,1}^0(B_{l_1}(x),\ldots,B_{l_k}(x),A_{n+1-r-s,1}^0(x,z))],\\
 T&=&-\sum_{r=2}^n\sum_{\begin{array}{c}1\leq k\leq\left[\frac{r}2\right]\\
 	  s=l_1+\ldots+l_{k}\leq r
 	\end{array}	}[A_{k,1}^0(B_{l_1}(x),\ldots,B_{l_k}(x),A_{r-s,1}^0(x,y)),A_{n+1-r,1}^0(x,\ldots,x,z)],\\
 U&=&-\sum_{r=2}^{n-1}\sum_{\begin{array}{c}
 		1\leq k\leq\left[\frac{r}2\right]\\  s_1=l_1+\ldots+l_{k}\leq r\end{array}	}
 	\sum_{\begin{array}{c}1\leq h\leq\left[\frac{n+1-r}2\right]\\  s_2=p_1+\ldots+p_{h}\leq n+1-r
	\end{array}	}\\&&[A_{k,1}^0(B_{l_1}(x),\ldots,B_{l_k}(x),A_{r-s_1,1}^0(x,y)),A_{h,1}^0(B_{p_1}(x),\ldots,B_{p_h}(x),A_{n+1-r-s_2,1}^0(x,z))].		\end{eqnarray*}}
 On the other hand, if we denote
 	\[ D_{k,s}(x)(y)=A_{k,1}^0(B_{l_1}(x),\ldots,B_{l_k}(x),A_{n+1-s,1}^0(x,y)), \]
 	we remark that the computation of $\de(R(x))$ is based on Proposition \ref{prQ} where we have computed  the $\de(D_{k,s}(x))$. 
 	
 	To conclude, we need to show that
 	\[ S+T+U=\sum_{\begin{array}{c}1\leq k\leq\left[\frac{n+1}2\right]\\
 	  l_1+\ldots+l_{k}=s\leq n+1\\ l_1,\ldots,l_k\leq n
 	\end{array}	}Q(k,s), \]	where $Q(k,s)$ is given in Proposition \ref{prQ}. Let $S_1$ and $T_1$ be the terms in $S$ and $T$ corresponding to $k=1$. We have
 \begin{eqnarray*}
 	S_1&=&-\sum_{r=1}^{n-1}\sum_{\begin{array}{c} 2\leq s\leq n+1-r
	\end{array}	}[A_{r,1}^0(x,\ldots,x,y),
 A_{1,1}^0(B_{s}(x),A_{n+1-r-s,1}^0(x,z))],\\
	T_1&=&-\sum_{r=2}^n\sum_{\begin{array}{c} 2\leq s\leq r
 	\end{array}	}[A_{1,1}^0(B_{s}(x),A_{r-s,1}^0(x,y)),A_{n+1-r,1}^0(x,\ldots,x,z)].
 	\end{eqnarray*}On the other hand,
 	\begin{eqnarray*} 
 	\sum_{2\leq s\leq n}Q(1,s)&=&-\sum_{2\leq s\leq n}\sum_{r=0}^{n-s}[A_{1,1}^0(B_s(x),A_{r,1}^0(x,y)),A_{n+1-s-r,1}^0(x,z)]
 	-\sum_{2\leq s\leq n}\sum_{r=1}^{n+1-s}[A_{r,1}^0(x,y),A_{1,1}^0(B_s(x),A_{n+1-s-r,1}^0(x,z))].	
 	\end{eqnarray*}	Since
 	\begin{eqnarray*} \{(r,s), 1\leq r\leq n-1,2\leq s\leq n+1-r \}&=&
 	\{(r,s), 1\leq r\leq n+1-s,2\leq s\leq n \},\\
 	\{(r-s,s), 2\leq r\leq n,2\leq s\leq r \}&=&
 		\{(r,s), 0\leq r\leq n-s,2\leq s\leq n \}, \end{eqnarray*}	$S_1+T_1=\sum_{2\leq s\leq n}Q(1,s)$. In the same way, one can see easily that{\footnotesize
 	\begin{eqnarray*}
 	S-S_1+T-T_1&=&-\sum_{\begin{array}{c} 2\leq k\leq\left[\frac{n+1}{2}\right]\\ l_1+\ldots+l_k=s\leq n+1
 	\end{array}	}\\&&\sum_{r=1}^{n+1-s}[A_{r,1}^0(x,y),
 		A_{k,1}^0(B_{l_{1}}(x),\ldots,B_{l_{k}}(x),A_{n+1-s-r,1}^0(x,z))]
 		-\sum_{r=0}^{n-s}
 		[A_{k,1}^0(B_{l_{1}}(x),\ldots,B_{l_{k}}(x),A_{r,1}^0(x,y)),
 		A_{n+1-s-r,1}^0(x,z)].
 		\end{eqnarray*}	}
 		
 		To conclude, we must show that $Q_1=Q_2$ where {
 			\begin{eqnarray*}
 				Q_1&=&
 				-\sum_{\substack{ 2\leq k\leq\left[\frac{n+1}{2}\right]\\ l_1+\ldots+l_k=s\leq n+1\\
 					}}\frac1{k!}\sum_{r=0}^{n+1-s}
 					\sum_{p=1}^{k-1}\sum_{\sigma\in S_k}\\
 					&&	[A_{p,1}^0(B_{l_{\sigma(1)}}(x),\ldots,B_{l_{\sigma(p)}}(x),A_{r,1}^0(x,y)),
 					A_{k-p,1}^0(B_{l_{\sigma(p+1)}}(x),\ldots,B_{l_{\sigma(k)}}(x),A_{n+1-s-r,1}^0(x,z))].
 				\end{eqnarray*}
 				}
 				
 				\begin{eqnarray*}
 					Q_2&=&-\sum_{r=2}^{n-1}\sum_{\begin{array}{c}1\leq p\leq\left[\frac{r}2\right]\\  s_1=l_1+\ldots+l_{p}\leq r\\\end{array}	}
 					\sum_{\begin{array}{c} 1\leq h\leq\left[\frac{n+1-r}2\right]\\ s_2=m_1+\ldots+m_{h}\leq n+1-r
 						\end{array}}\\&&[A_{p,1}^0(B_{l_1}(x),\ldots,B_{l_p}(x),A_{r-s_1,1}^0(x,y)),A_{h,1}^0(B_{m_1}(x),\ldots,B_{m_h}(x),A_{n+1-r-s_2,1}^0(x,z))].		\end{eqnarray*}
 					Denote by $\N_2=\{2,3,\ldots\}$,  for any $l=(l_1,\ldots,l_k)\in\N^k$, $|l|=l_1+\ldots+l_k$, for any $\sigma\in S_k$, $l^\sigma=(l_{\sigma(1)},\ldots,l_{\sigma(k)})$
 					and for $k\in\left\{1,\ldots,\left[\frac{n+1}2\right]\right\}$ and $s\leq n+1$ put
 					\[ \mathcal{S}(k,s)=\left\{ (l,\sigma,r,p)\in\N_2^k\times S_k\times \N\times\N,|l|=s, 0\leq r\leq n+1-s, 1\leq p\leq k-1 \right\} \]and for any
 					$(l,\sigma,r,p)\in \mathcal{S}(k,s)$ put
 					\[ \Phi(l,\sigma,r,p)=[A_{p,1}^0(B_{l_{\sigma(1)}}(x),\ldots,B_{l_{\sigma(p)}}(x),A_{r,1}^0(x,y)),
 					A_{k-p,1}^0(B_{l_{\sigma(p+1)}}(x),\ldots,B_{l_{\sigma(k)}}(x),A_{n+1-s-r,1}^0(x,z))]. \]Thus
 					\[ Q_1=-\sum_{\begin{array}{c} 2\leq k\leq\left[\frac{n+1}{2}\right],s\leq n+1
 						\end{array}	}\left[\frac1{k!}\sum_{(l,\sigma,r,p)\in \mathcal{S}(k,s) }\Phi(l,\sigma,r,p)\right]. \]

 					The map $S_k\times \mathcal{S}(k,s)\too \mathcal{S}(k,s)$, $(\mu,(l,\sigma,r,p))\mapsto (l^\mu,\sigma\circ\mu^{-1},r,p)$ defines a free action of $S_k$ and the map $[l,\sigma,r,p]\mapsto (l^{\sigma},r,p)$ identifies the quotient $\wi{\mathcal{S}(k,s)}$  to
 					\[ \wi{\mathcal{S}(k,s)}=\left\{ (l,r,p)\in\N_2^k\times \N\times\N,|l|=s, 0\leq r\leq n+1-s, 1\leq p\leq k-1 \right\}. \] Moreover, $\Phi(l,\sigma,r,p)=\Phi(l^\mu,\sigma\circ\mu^{-1},r,p)$ so
 					\[ Q_1=-\sum_{\begin{array}{c} 2\leq k\leq\left[\frac{n+1}{2}\right],s\leq n+1
 						\end{array}	}\left[\sum_{(l,r,p)\in \wi{\mathcal{S}(k,s)} }\Phi(l,\mathrm{Id},r,p)\right]. \]

 					On the other hand,  put
 					\[ T=\left\{ (l,m,p,q,r)\in \N_2^p\times\N_2^q\times\N\times\N\times\N,2\leq r\leq n-1,|l|\leq r,|m|\leq n+1-r,1\leq p\leq\left[\frac{r}2\right],1\leq q\leq\left[\frac{n+1-r}2\right]    \right\}. \]
 					We have
 					\[ Q_2=\sum_{(l,m,p,q,r)\in T}\Psi(l,m,p,q,r), \]where
 					\[ \Psi(l,m,p,q,r)=[A_{p,1}^0(B_{l_1}(x),\ldots,B_{l_p}(x),A_{r-|l|,1}^0(x,y)),A_{h,1}^0(B_{m_1}(x),\ldots,B_{m_h}(x),A_{n+1-r-|m|,1}^0(x,z))]. \]
 					We consider now 
 					\[ J:T\too\bigcup_{\begin{array}{c} 2\leq k\leq\left[\frac{n+1}{2}\right],s\leq n+1
 						\end{array}}\wi{\mathcal{S}(k,s)} \]given by
 					\[ J(l,m,p,q,r)=((l,m),r-|l|,p)\in \wi{\mathcal{S}(p+q,|l|+|m|)}.  \]Indeed, it is obvious that $2\leq p+q\leq \left[\frac{n+1}{2}\right]$, $1\leq p\leq p+q-1$. Moreover, since $|l|\leq r$ and $|m|\leq n+1-r$ then $0\leq r-|l|\leq n+1-(|l|+|m|)$ and hence $((l,m),r-|l|,p)\in \wi{\mathcal{S}(p+q,|l|+|m|)}.$.

 					$J$ is a bijection since we have
 					\[ J^{-1}(l,p,r)=((l_1,\ldots,l_p),(l_{p+1},\ldots,l_k),p,k-p,s=r+l_1+\ldots+l_p)\in T. \]Indeed, we have
 					\[ 2\leq k\leq\left[\frac{n+1}2\right],\; 1\leq p\leq k-1\esp 0\leq r\leq n+1-|l|.  \]This implies obviously that $1\leq p$ and $1\leq k-p$. Now $2p\leq l_1+\ldots+l_p$ and hence $p\leq\left[\frac{s}2\right]$. This with $k\leq\left[\frac{n+1}2\right]$ imply that $k-p\leq \left[\frac{n+1-s}2\right]$. It is obvious that $s\geq2$ and from
 					$0\leq r\leq n+1-|l|$, we get
 					\[ s\leq n+1-(l_{p+1}+\ldots l_k)\leq n-1\esp l_{p+1}+\ldots+l_{k}\leq n+1-s. \]
 					This completes the proof.
 										\end{proof}

 \section{Analytic linear Lie rack structures on $\mathfrak{sl}_2(\R)$ and $\mathfrak{so}(3)$ }\label{section5}
 
 We denote by $\mathfrak{sl}_2(\R)$ the Lie algebra of traceless real $2\times 2$-matrices and by $\mathfrak{so}(3)$ the Lie algebra of skew-symmetric real $3\times 3$-matrices. We consider them as left Leibniz algebras and the purpose of this section is to prove that they are rigid in the sense of Definition \ref{def}. Namely, we will  prove the following theorem. 
  \begin{theo}\label{h} Let $\h$ be either $\mathfrak{sl}_2(\R)$ or $\mathfrak{so}(3)$ and $\rhd$ an analytic linear Lie rack structure  on $\h$ such that $\br_\rhd$ is the Lie algebra bracket of $\h$. Then there exists an analytic function $F:\R\too\R$ given by
  	\[ F(u)=1+\sum_{k=1}^{\infty}a_ku^k \]such that, for any $x,y\in\h$,
  	\[ x\rhd y=\exp(F(\langle x,x\rangle )\ad_x)(y), \]where $\langle x,x\rangle=\frac12\tr(\ad_x\circ\ad_x)$. So $\h$ is rigid.
  	
  \end{theo}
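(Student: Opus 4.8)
The plan is to combine the structure theorem \ref{main2} with a complete description of the invariant symmetric multilinear maps on $\h$, after which only a routine summation of a series remains.

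\emph{Reduction to formula \eqref{eq}.} Since $\h$ is a simple Lie algebra, $Z(\h)=0$ and every derivation of $\h$ is inner, so $H^0(\h)=H^1(\h)=0$; by Remark \ref{remark} the same holds for the cohomology of $\h$ as a left Leibniz algebra. Write the given structure as $x\rhd y=y+\sum_{n\geq1}A_{n,1}(x,y)$ as in \eqref{eq4}. The hypothesis $\br_\rhd=[\;,\;]$ gives $A_{0,1}(x,y)=y$ and $A_{1,1}(x,y)=[x,y]$, and by Theorem \ref{main} every $A_{n,1}$ is invariant and the family satisfies \eqref{eqc}. Hence Theorem \ref{main2} applies and yields a unique sequence $(B_n)_{n\geq2}$ of invariant symmetric multilinear maps $B_n\colon\h\times\cdots\times\h\too\h$ for which \eqref{eq} holds.

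\emph{Classification of the $B_n$.} The key point is that for $\h=\mathfrak{sl}_2(\R)$ or $\mathfrak{so}(3)$ every invariant symmetric $n$-linear map $B\colon\h\times\cdots\times\h\too\h$ vanishes when $n$ is even, and when $n=2m+1$ is odd satisfies $B(x,\ldots,x)=c\,\langle x,x\rangle^{m}x$ for some $c\in\R$. Such a $B$ is the same thing as an element of $\mathrm{Hom}_\h(S^n\h,\h)$; the adjoint representation of $\h$ is absolutely irreducible and complexifies to the irreducible $3$-dimensional representation $V_2$ of $\mathfrak{sl}_2(\C)$ (recall $\mathfrak{so}(3)\otimes\C\cong\mathfrak{sl}_2(\C)$), and a weight--multiplicity count shows that $V_2$ occurs in $S^n(V_2)$ with multiplicity $1$ if $n$ is odd and $0$ if $n$ is even. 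Since $\dim_\R\mathrm{Hom}_\h(S^n\h,\h)=\dim_\C\mathrm{Hom}_{\mathfrak{sl}_2(\C)}(S^n(V_2),V_2)$ and, for $n=2m+1$, the map $x\mapsto\langle x,x\rangle^{m}x$ is a nonzero invariant symmetric map, the claim follows. (For $\mathfrak{so}(3)$ one may argue directly: an $SO(3)$-equivariant polynomial map $\R^3\too\R^3$ has the form $h(|x|^2)\,x$, which forces odd degree.) In particular $B_{2m}=0$ and, putting $u:=\langle x,x\rangle$, there are scalars $c_m$ with $B_{2m+1}(x)=c_m\,u^{m}x$ for all $m\geq1$.

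\emph{Summation.} Substitute this into \eqref{eq}. As each $B_{l_i}(x)$ is a scalar multiple of $x$, and vanishes when $l_i$ is even, only terms with all $l_i$ odd (hence $l_i\geq3$, $m_i:=(l_i-1)/2\geq1$) survive; using $A^0_{k,1}(x,\ldots,x,w)=\tfrac1{k!}\ad_x^{k}(w)$ and $A^0_{j,1}(x,y)=\tfrac1{j!}\ad_x^{j}(y)$ from Corollary \ref{co1}, the term of \eqref{eq} indexed by $k$ and $(l_1,\ldots,l_k)$, with $s=l_1+\cdots+l_k$, equals $\big(\prod_{i}c_{m_i}u^{m_i}\big)\,\tfrac1{k!\,(n-s)!}\,\ad_x^{\,k+n-s}(y)$. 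Summing over $n\geq s$ (set $j=n-s$, so $\sum_{j\geq0}\tfrac1{j!}\ad_x^{k+j}=\ad_x^{k}\exp(\ad_x)$), then over $(l_1,\dots,l_k)\leftrightarrow(m_1,\dots,m_k)\in(\N^*)^k$, then over $k\geq1$, and writing $C(u):=\sum_{m\geq1}c_m u^{m}$, one finds that, on top of $\sum_{n\geq0}A^0_{n,1}(x,y)=\exp(\ad_x)(y)$, the correction equals
\[ \sum_{k\geq1}\frac{C(u)^{k}}{k!}\,\ad_x^{k}\big(\exp(\ad_x)(y)\big)=\big(\exp(C(u)\ad_x)-\mathrm{Id}\big)\exp(\ad_x)(y)=\exp\big((1+C(u))\ad_x\big)(y)-\exp(\ad_x)(y). \]
Hence $x\rhd y=\exp\big((1+C(u))\ad_x\big)(y)$. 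Putting $F:=1+C$, which is analytic with $F(0)=1$ (its convergence being read off, along a line $x=tx_0$ with $\langle x_0,x_0\rangle\neq0$, from that of $\rhd$), this is the asserted identity; since $P=\langle\;,\;\rangle$ is an invariant symmetric $2$-form, it realizes $\h$ as rigid in the sense of Definition \ref{def}, consistently with Proposition \ref{pr1}.

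\emph{Main obstacle.} The hard part is the classification step: ruling out any further invariant symmetric $\h$-valued map is not visible on the real form $\mathfrak{sl}_2(\R)$, since the Killing form is indefinite and the adjoint group non-compact, and genuinely requires the passage to $\mathfrak{sl}_2(\C)$ (or an explicit hands-on analysis using the root-space structure of these simple Lie algebras). One must also justify the rearrangements of the multiple series in the summation step, which is harmless for $x$ in a neighbourhood of $0$, or can be done at the level of formal power series in $x$, each homogeneous component being a finite sum.
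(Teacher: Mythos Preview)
Your argument is correct, and it is genuinely shorter and more transparent than the paper's. The two proofs agree up to the point where Theorem \ref{main2} produces the sequence $(B_n)$ and one classifies the invariant symmetric $\h$-valued forms. For that classification the paper invokes the vector-valued Chevalley restriction theorem (Theorem \ref{thenkv} and Corollary \ref{sym}), while you obtain the same conclusion by computing the multiplicity of $V_2$ in $S^n(V_2)$ over $\mathfrak{sl}_2(\C)$; both arguments are valid and yield $B_{2m}=0$, $B_{2m+1}(x)=c_m\langle x,x\rangle^m x$.

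The real divergence is in the final step. The paper first uses the rank-one identity \eqref{magic}--\eqref{magic1} to reduce each $A_{n,1}$ to a scalar multiple of $A_{n,1}^0$ (Proposition \ref{pr}), then re-applies \eqref{eqc} to obtain the recursion \eqref{evend} for the even coefficients, and finally matches with $\exp(F(u)\ad_x)$ by a somewhat delicate induction on the coefficients (equations \eqref{od}--\eqref{even}). You bypass all of this: once every $B_l(x)$ is a scalar multiple of $x$, formula \eqref{eq} becomes a sum of terms $\frac{1}{k!(n-s)!}\big(\prod_ic_{m_i}u^{m_i}\big)\ad_x^{k+n-s}(y)$, and summing first over $n\geq s$, then over the tuples $(m_i)$, then over $k$, collapses the whole expression to $\exp(C(u)\ad_x)\exp(\ad_x)(y)=\exp\big((1+C(u))\ad_x\big)(y)$. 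This is cleaner, and it does not use the special identity $\ad_x^3=\langle x,x\rangle\ad_x$ at all in the summation; in particular your computation would go through verbatim for any simple $\h$ in which every invariant symmetric $\h$-valued form has the shape $B_n(x)=P_n(x)\,x$ with $P_n$ an invariant polynomial, which is a hint towards the paper's conjecture.

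One minor point: like the paper, you do not make the analyticity of $F$ fully explicit. Your remark that the identity can be checked degree by degree as a formal power series in $x$ is the right way to justify the rearrangements; to finish, note that for $\mathfrak{sl}_2(\R)$ (resp.\ $\mathfrak{so}(3)$) the quantity $u=\langle x,x\rangle$ ranges over all of $\R$ (resp.\ over $(-\infty,0]$), so convergence of $x\rhd y$ for all $x$ forces the power series $1+\sum c_mu^m$ to have infinite radius of convergence, hence $F$ is entire.
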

  
  The proof of this theorem is based on Theorem \ref{main2}. So the first step is the  determination of symmetric invariant multilinear maps on $\h=\mathfrak{sl}_2(\R)$ and $\mathfrak{so}(3)$.
  To achieve that, we use  the Chevalley restriction theorem for vector-valued functions proved in \cite{KNV}. We recall its statement as explained in \cite{bala}.
  
  Let $\G$ be a complex semi-simple Lie algebra, $\h$ a Cartan subalgebra, $G$ the connected and simply connected Lie group of $\G$ and $H$ the maximal torus in $G$ generated by $\exp_G(\h)$. We denote by $N_G(H)$ the normalizer of $H$ in $G$. Note that for any $a\in N_G(H)$, $\Ad_a$ leaves $\h$ invariant and $W=\{\Ad_{a}{|\h},a\in N_G(H)   \}$ is the Weyl group of $\h$.
  Let $B:\G\times\ldots\times\G\too \G$ be a symmetric $n$-multilinear map which is $\G$-invariant, i.e., for any $a\in G$ and any $x_1,\ldots,x_n\in\G$,
  \begin{equation}\label{eqs1}
  B(\Ad_ax_1,\ldots,\Ad_ax_n)=\Ad_aB(x_1,\ldots,x_n).
  \end{equation}This is equivalent to
  \begin{equation}\label{eqs2}
  [y,B(x_1,\ldots,x_n)]=\sum_{i=1}^nB(x_1,\ldots,[y,x_i],\ldots,x_n),\quad y,x_1,\ldots,x_n\in\G.
  \end{equation}
  We denote by $S_n^{\G}(\G,\G)$ the vector space of $\G$-invariant $n$-multilinear symmetric forms on $\G$ with values in $\G$. 
  
  Let $B\in S_n^{\G}(\G,\G)$ and we denote by $\wi B$ its restriction to $\h$. From \eqref{eqs2}, we get that for any $y,x_1,\ldots,x_n\in\h$,
  \[ [y,\wi B(x_1,\ldots,x_n)]=0 \]and hence $\wi B(x_1,\ldots,x_n)\in\h$ (since $\h$ is a maximal abelian subalgebra). So $\wi B$ defines a $n$-multilinear symmetric map $\wi B:\h\times\ldots\times\h\too\h$ which is $W$-invariant. If we denote by $S_n^{W}(\h,\h)$ the vector space of $G$-invariant $n$-multilinear symmetric forms on $\h$ with values in $\h$, we get
  a map $\operatorname{Res}:S_n^{\G}(\G,\G)\too S_n^{W}(\h,\h)$. 
  \begin{theo}[\cite{KNV}] \label{thenkv} $\operatorname{Res}$ is injective.
  \end{theo}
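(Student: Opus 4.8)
\medskip

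\noindent The plan is to deduce the injectivity of $\operatorname{Res}$ from the elementary fact that a polynomial map vanishing on a dense set vanishes identically, the dense set being the union of all Cartan subalgebras of $\G$.

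First I would pass from multilinear maps to polynomial maps by polarization. Since the ground field is $\C$, a symmetric $n$-multilinear map $B:\G\times\ldots\times\G\too\G$ is completely determined by its diagonal $\hat B(x):=B(x,\ldots,x)$, which is a homogeneous polynomial map $\G\too\G$ of degree $n$; likewise on $\h$. Under this correspondence \eqref{eqs1} (equivalently \eqref{eqs2}, as $G$ is connected) says precisely that $\hat B$ is $\Ad(G)$-equivariant, i.e. $\hat B(\Ad_a x)=\Ad_a\hat B(x)$ for all $a\in G$ and $x\in\G$; and an element of $S_n^{W}(\h,\h)$ becomes a $W$-equivariant homogeneous polynomial map $\h\too\h$, while $\operatorname{Res}$ becomes the restriction $\hat B\mapsto\hat B|_{\h}$ (well-defined into maps with values in $\h$ because, as already observed, $\wi B(x_1,\ldots,x_n)$ centralizes $\h$ and hence lies in the Cartan subalgebra $\h$).

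Now suppose $B\in S_n^{\G}(\G,\G)$ lies in $\ker\operatorname{Res}$, so that $\hat B(y)=0$ for all $y\in\h$. By equivariance $\hat B(\Ad_a y)=\Ad_a\hat B(y)=0$ for every $a\in G$ and $y\in\h$, hence $\hat B$ vanishes on $\bigcup_{a\in G}\Ad_a(\h)$. This set contains every regular semisimple element of $\G$, since the centralizer of such an element is a Cartan subalgebra and all Cartan subalgebras of $\G$ are $G$-conjugate to $\h$. The regular semisimple elements form a Zariski-open dense subset of $\G$ (the complement in $\G\cong\C^{\dim\G}$ of the zero locus of the single nonzero polynomial $x\mapsto$ coefficient of $t^{\dim\h}$ in $\det(t\,\mathrm{Id}-\ad_x)$). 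As $\hat B$ is polynomial, hence continuous, it vanishes on all of $\G$, and polarization then gives $B=0$. Thus $\operatorname{Res}$ is injective.

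The only genuinely non-formal ingredients here are structural facts about $\G$ — conjugacy of Cartan subalgebras and density of the regular semisimple locus — and the rest is routine bookkeeping with polarization and equivariance, so I do not anticipate a real obstacle: injectivity is the easy half of a Chevalley-type restriction theorem. (The hard half, surjectivity of $\operatorname{Res}$, which is the full content of \cite{KNV}, is not needed here; in Section \ref{section5} we use only that a $\G$-invariant symmetric multilinear map is determined by its restriction to $\h$.)
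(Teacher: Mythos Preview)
Your argument is correct. The paper itself does not prove Theorem~\ref{thenkv}: it simply states the result and attributes it to \cite{KNV}, using only the injectivity statement in the subsequent computations of Section~\ref{section5}. Your proposal therefore supplies what the paper omits --- a short self-contained proof of the injectivity direction via polarization, $\Ad(G)$-equivariance, conjugacy of Cartan subalgebras, and Zariski-density of the regular semisimple locus. This is the standard elementary argument for the easy half of a Chevalley-type restriction theorem, and every step is sound; you are also right that the surjectivity half (the genuine content of \cite{KNV}) is never invoked in the paper.
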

  
  Let $\G$ be a real semi-simple Lie algebra. The definition of $S_n^{\G}(\G,\G)$ is similar to the complex case.
  The complexified  Lie algebra $\G^{\C}=\G\oplus i\G$ of $\G$ is also semi-simple and we have an injective map $S_n^{\G}(\G,\G)\too
  S_n^{\G^{\C}}(\G^{\C},\G^{\C})$, which assigns to each $\G$-invariant $n$-multilinear invariant form  $B$ on $\G$   the unique $\C$-multilinear map $B^\C$ from $\G^\C\times\ldots\times\G^\C$ to $\G^\C$ whose restriction to $\G$ is $B$. By using \eqref{eqs2} one can see easily that since $B$ is $\G$-invariant then $B^\C$ is $\G^\C$-invariant.

  We will now apply Theorem \ref{thenkv} and the embedding above to compute $S_n^{\G}(\G,\G)$ for any $n\in \N^*$ when $\G=\mathfrak{sl}_2(\C)$, $\G=\mathfrak{sl}_2(\R)$ or $\G=\mathfrak{so}(3)$.
  
  Let $\G=\mathfrak{sl}_2(\C)$, $\G=\mathfrak{sl}_2(\R)$ or $\G=\mathfrak{so}(3)$. For any $n\in\N^*$, we define
  $P:\G^{2n}\too\mathbb{K}$ $(\mathbb{K}=\R,\C)$ by
  \[ P_n(x_1,\ldots,x_{2n})=\frac1{(2n)!}\sum_{\sigma\in S_{2n}}\langle x_{\sigma(1)},x_{\sigma(2)}\rangle\ldots
  \langle x_{\sigma(2n-1)},x_{\sigma(2n)}\rangle\esp  P_0=1, \]where $\langle x,x\rangle=\frac12\tr(\ad_x^2)$.
  This defines a symmetric invariant form on $\G$ and the map $B_n^\G:\G^{2n+1}\too\G$ given by
  \[ B_n^\G(x_1,\ldots,x_{2n+1})=\sum_{k=1}^{2n+1}P_n(x_1,\ldots,\hat{x}_k,\ldots,x_{2n+1})x_k  \] is symmetric and invariant.

  \begin{theo} Let $\G=\mathfrak{sl}_2(\C)$.  Then, for any $n\in\N^*$, we have
  	\[ S_{2n}^{\G}(\G,\G)=0\esp S_{2n+1}^{\G}(\G,\G)=\C B_{n}^\G. \]
  	
  \end{theo}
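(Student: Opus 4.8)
The plan is to invoke the Chevalley restriction theorem (Theorem~\ref{thenkv}) to transfer the problem from $\G=\mathfrak{sl}_2(\C)$ to its Cartan subalgebra, where the computation becomes elementary. First I would take $\h=\C H$ with $H=\mathrm{diag}(1,-1)$ as a Cartan subalgebra of $\G$; it is one-dimensional, and the Weyl group $W\cong\Z/2\Z$ acts on $\h$ by $H\mapsto -H$. Since $\h$ is a line, any symmetric $m$-multilinear map $T:\h\times\ldots\times\h\too\h$ is determined by the scalar $c$ with $T(H,\ldots,H)=cH$, and conversely every scalar yields such a $T$; so the space of such maps is one-dimensional. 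Imposing $W$-invariance gives $T(-H,\ldots,-H)=-T(H,\ldots,H)$, i.e. $(-1)^m cH=-cH$, which forces $c=0$ when $m$ is even and imposes nothing when $m$ is odd. Hence $S_{2n}^{W}(\h,\h)=0$ and $\dim S_{2n+1}^{W}(\h,\h)=1$ for every $n\in\N^*$.

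By Theorem~\ref{thenkv} the restriction map $\operatorname{Res}:S_m^{\G}(\G,\G)\too S_m^{W}(\h,\h)$ is injective, so immediately $S_{2n}^{\G}(\G,\G)=0$, which settles the even case, and $\dim S_{2n+1}^{\G}(\G,\G)\le 1$. It then remains only to exhibit one nonzero element of $S_{2n+1}^{\G}(\G,\G)$, and the candidate is the map $B_n^\G$ introduced just above, which is already known to be symmetric and $\G$-invariant. Evaluating it on the diagonal, each of its $2n+1$ terms contributes $P_n(H,\ldots,H)H=\langle H,H\rangle^n H$, so $B_n^\G(H,\ldots,H)=(2n+1)\langle H,H\rangle^n H$; since $\langle H,H\rangle=\tfrac12\tr(\ad_H^2)=4\ne 0$ we get $B_n^\G\ne 0$. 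Together with $\dim S_{2n+1}^{\G}(\G,\G)\le 1$ this gives $S_{2n+1}^{\G}(\G,\G)=\C B_n^\G$, which is the assertion.

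I do not expect a serious obstacle: the argument is structural once Theorem~\ref{thenkv} is on hand. The only delicate points are the observation that a symmetric multilinear map valued in a line is just a scalar multiple of the monomial map, the sign bookkeeping $(-1)^m c=-c$ that produces the parity dichotomy, and the fact that $\langle H,H\rangle\ne 0$ — which holds because the Killing form is nondegenerate on a semisimple Lie algebra, or directly since $\ad_H=\mathrm{diag}(0,2,-2)$ in the basis $(H,E,F)$ of $\mathfrak{sl}_2(\C)$.
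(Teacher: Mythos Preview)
Your proof is correct and follows essentially the same route as the paper: reduce via the restriction theorem to the one-dimensional Cartan $\h=\C H$, use the Weyl reflection $H\mapsto -H$ to get the parity dichotomy $(-1)^m c=-c$, and then exhibit $B_n^\G$ as a nonzero element in the odd case. If anything, your argument is slightly more complete than the paper's, since you explicitly verify $B_n^\G(H,\ldots,H)=(2n+1)\langle H,H\rangle^n H\ne 0$, whereas the paper leaves the nonvanishing of $B_n^\G$ implicit.
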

  \begin{proof} A Cartan subalgebra of $\G$ is $\h=\C\left(\begin{matrix}
  	1&0\\0&-1
  	\end{matrix} \right)$ which is one dimensional and hence, for any $n\in\N^*$, the dimension of $S_n^{W}(\h,\h)$ is less than or equal to $\leq1$. By virtue of Theorem \ref{thenkv} we get $\dim S_{n}^{\G}(\G,\G)\leq1$. Moreover, the associated Lie group to $\h$ is $H=\left\{ \left(\begin{matrix}
  	z&0\\0&z^{-1}
  	\end{matrix} \right),z\in\C^*  \right\}$ and one can see easily that $a=\left(\begin{matrix}
  	0&1\\-1&0
  	\end{matrix} \right)\in N_G(H)$ and hence $Ad_{a}|\h\in W$. Now
  	\[ Ad_{a}\left(\begin{matrix}
  	1&0\\0&-1
  	\end{matrix} \right)=a\left(\begin{matrix}
  	1&0\\0&-1
  	\end{matrix} \right)a^{-1}=-\left(\begin{matrix}
  	1&0\\0&-1
  	\end{matrix} \right). \]
  	Thus for any $B\in S_n^{W}(\h,\h)$, the invariance by $\Ad_a$ implies that, for any $x_1,\ldots,x_n\in\h$,
  	\[ (-1)^nB(x_1,\ldots,x_n)=-B(x_1,\ldots,x_n). \]So if $n$ is even then $B=0$ and hence $S_n^{\G}(\G,\G)=0$. If $n=2p+1$ is odd, the restriction theorem shows that $\dim S_{2p+1}^{\G}(\G,\G)\leq1$ and since $B_p^\G\in S_{2p+1}^{\G}(\G,\G)$ we get the result. 
  \end{proof}
  If $\G=\mathfrak{sl}_2(\R)$ or $\G=\mathfrak{so}(3)$ then $\G^\C$ is isomorphic to $\mathfrak{sl}_2(\C)$ and since the invariants of $\G$ are embedded in the invariants of $\G^\C$ we get the following  corollary.
  \begin{co}\label{sym} If $\G=\mathfrak{sl}_2(\R)$ or $\G=\mathfrak{so}(3)$ then, for any $n\in\N^*$, we have
  	\[ S_{2n}^{\G}(\G,\G)=0\esp S_{2n+1}^{\G}(\G,\G)=\R B_{n}^\G. \]
  	
  \end{co}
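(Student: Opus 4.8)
The plan is to deduce the real statement from its complex counterpart, established in the preceding theorem, by means of the injection $S_n^{\G}(\G,\G)\hookrightarrow S_n^{\G^{\C}}(\G^{\C},\G^{\C})$ together with the isomorphism $\G^{\C}\cong\mathfrak{sl}_2(\C)$. First I would check that all the relevant data are compatible with this machinery. The form $\prs$ given by $\langle x,y\rangle=\frac12\tr(\ad_x\circ\ad_y)$ is defined intrinsically, hence it is preserved by every Lie algebra isomorphism, and its $\C$-bilinear extension to $\G^{\C}$ is carried by $\G^{\C}\cong\mathfrak{sl}_2(\C)$ onto the form $\frac12\tr(\ad_\cdot^2)$ on $\mathfrak{sl}_2(\C)$. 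Since $P_n$ and $B_n^{\G}$ are polynomial expressions built from $\prs$ and the arguments, and polynomial constructions commute with $\C$-linear extension and with isomorphisms, the embedding $B\mapsto B^{\C}$ sends $B_n^{\G}$ to $B_n^{\mathfrak{sl}_2(\C)}$; in particular $B_n^{\G}\in S_{2n+1}^{\G}(\G,\G)$ and $(B_n^{\G})^{\C}=B_n^{\mathfrak{sl}_2(\C)}$.

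For the even case there is nothing to prove: the preceding theorem gives $S_{2n}^{\mathfrak{sl}_2(\C)}(\mathfrak{sl}_2(\C),\mathfrak{sl}_2(\C))=0$, and since $S_{2n}^{\G}(\G,\G)$ injects into it, it vanishes too.

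For the odd case $n=2m+1$, the embedding places $S_{2m+1}^{\G}(\G,\G)$ inside the complex line $S_{2m+1}^{\mathfrak{sl}_2(\C)}(\mathfrak{sl}_2(\C),\mathfrak{sl}_2(\C))=\C B_m^{\mathfrak{sl}_2(\C)}$. Given $C\in S_{2m+1}^{\G}(\G,\G)$, I would write $C^{\C}=\lambda B_m^{\mathfrak{sl}_2(\C)}$ with $\lambda\in\C$ and restrict back to $\G$, obtaining $C=\lambda B_m^{\G}$. Since $B_m^{\G}$ is $\G$-valued while $\G\cap i\G=\{0\}$, and since $B_m^{\G}\neq 0$ (indeed $B_m^{\G}(x,\ldots,x)=(2m+1)\langle x,x\rangle^m x$, which is nonzero as soon as $\langle x,x\rangle\neq 0$, and such $x$ exists because $\prs$ is nondegenerate on the simple Lie algebra $\G$), it follows that $\lambda$ must be real. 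Hence $S_{2m+1}^{\G}(\G,\G)=\R B_m^{\G}$, and this is a genuine one-dimensional space since $B_m^{\G}\neq 0$.

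The only delicate point is the bookkeeping in the first paragraph, namely verifying that $B_n^{\G}$ corresponds, under the embedding and the isomorphism $\G^{\C}\cong\mathfrak{sl}_2(\C)$, exactly to the form $B_n^{\mathfrak{sl}_2(\C)}$ of the preceding theorem; this hinges on the fact that $\tr(\ad_\cdot^2)$ is invariant under Lie algebra isomorphisms and extends $\C$-bilinearly. Once that is settled, the corollary reduces to a one-dimensional count plus the elementary observation $\G\cap i\G=\{0\}$.
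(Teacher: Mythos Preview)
Your proposal is correct and follows exactly the route the paper takes: the paper simply observes that $\G^{\C}\cong\mathfrak{sl}_2(\C)$ and that the complexification map $B\mapsto B^{\C}$ embeds $S_n^{\G}(\G,\G)$ into $S_n^{\G^{\C}}(\G^{\C},\G^{\C})$, then invokes the complex theorem. You have filled in the details the paper leaves implicit, in particular the verification that $(B_n^{\G})^{\C}$ matches $B_n^{\mathfrak{sl}_2(\C)}$ and the argument via $\G\cap i\G=\{0\}$ forcing the scalar to be real; these are exactly the small checks one needs to make the one-line deduction rigorous.
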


  Let us pursue our preparation of the proof of Theorem \ref{h}. Let $\h=\mathfrak{sl}_2(\R)$ or $\mathfrak{so}(3,\R)$ and $x\in\h$. Then
  \[ x=\left( \begin{array}{cc}a&b\\c&-a\end{array}\right)\quad\mbox{or}\quad
  x=\left( \begin{array}{ccc}0&a&b\\-a&0&c\\-b&-c&0\end{array}\right). \]
 Put
 \[ \langle x,x\rangle=\left\{ \begin{array}{lcc}
 \frac12\tr(\ad_x^2)=2\tr(x^2)=4(a^2+bc)&\quad \mbox{if}\quad&\h=\mathfrak{sl}_2(\R),\\
 \frac12\tr(\ad_x^2)=\frac12\tr(x^2)=-a^2-b^2-c^2&\quad\mbox{if}\quad&\h=\mathfrak{so}(3).
 \end{array}     \right. \]The following formula which is true in both $\mathfrak{sl}_2(\R)$ and $\mathfrak{so}(3)$ is easy to check and will play a crucial role in the proof of Theorem \ref{h}. Indeed, for any $x,y\in\h$, 
 \begin{equation}\label{magic} \ad_x\circ\ad_x(z)=-\langle x,z\rangle x+\langle x,x\rangle z. \end{equation}
 This implies easily, by virtue of Corollary \ref{co1},  that 
 \begin{equation} \label{magic1}\left\{  \begin{array}{l} \di
 A_{2n,1}^0(x,y)=\frac{\langle x,x\rangle^{n-1}}{(2n)!}\ad_x^2(y)=\frac{\langle x,x\rangle^{n}}{(2n)!}y-\frac{\langle x,x\rangle^{n-1}\langle x,y\rangle}{(2n)!}x,\; n\geq1,\\\di
  A_{2n+1,1}^0(x,y)=\frac{\langle x,x\rangle^n}{(2n+1)!}[x,y],\quad n\geq0.\end{array}\right. \end{equation}

 \begin{pr}\label{pr} Let $\h$ be either $\mathfrak{sl}_2(\R)$ or $\mathfrak{so}(3)$ and $\rhd$ an analytic linear Lie rack product on $\h$ such that $\br_\rhd$ is the Lie algebra bracket of $\h$. Then there exists a sequence $(U_n)_{n\in\N^*}$ with $U_1=1$, $U_2=\frac12$, for any $x,y\in\h$,
 	\[ x\rhd y=y+  \left(\sum_{n=0}^\infty U_{2n+1}\langle x,x\rangle^n\right)[x,y]
 	+\left(\sum_{n=1}^\infty {U_{2n}\langle x,x\rangle^{n-1}}\right)\ad_x^2(y)
 	\] and for any $n\in\N^*$,
 	\[ U_{2n}=\frac12
 	\left[\sum_{r=0}^{n-1}{U_{2r+1}U_{2(n-r)-1}}-
 	\sum_{r=1}^{n-1}{U_{2r}U_{2(n-r)}}\right]. \]

 \end{pr}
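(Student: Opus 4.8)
The plan is to combine Theorem~\ref{main2} with the classification of invariant symmetric multilinear maps on $\h$ given by Corollary~\ref{sym}: this pins down the shape of the maps $A_{n,1}$ attached to $\rhd$, and then the cohomological equations \eqref{eqc} yield the recursion on the coefficients. Recall that, $\rhd$ being an analytic linear Lie rack structure, $x\rhd y=y+\sum_{n\geq1}A_{n,1}(x,\ldots,x,y)$ with each $A_{n,1}$ symmetric in its first $n$ arguments and $A_{1,1}=\br$ the Lie bracket.

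First I would record that $\h$, being a real simple Lie algebra, satisfies $H^0(\h)=H^1(\h)=0$: as a Lie algebra these are Whitehead's lemmas ($\h^{\h}=Z(\h)=0$ and every derivation is inner), and by Remark~\ref{remark} the Leibniz cohomology agrees with the Lie one in degrees $0$ and $1$. Since $\rhd$ is a rack, Theorem~\ref{main} gives that the $A_{n,1}$ satisfy \eqref{eqm}; in particular they are invariant (this is \eqref{inv}, the case $p=1$) and satisfy \eqref{eqc} (the case $q=1$). Hence Theorem~\ref{main2} applies and furnishes unique invariant symmetric forms $B_l$, $l\geq2$, for which \eqref{eq} holds. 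Now Corollary~\ref{sym} applied to $\G=\h$ says $S^{\h}_{2m}(\h,\h)=0$ and $S^{\h}_{2m+1}(\h,\h)=\R B^{\h}_m$, and $B^{\h}_m(x,\ldots,x)=(2m+1)\langle x,x\rangle^m x$; consequently $B_l=0$ for $l$ even and $B_l(x,\ldots,x)=\beta_l\langle x,x\rangle^{(l-1)/2}x$ for a constant $\beta_l$ when $l$ is odd. Substituting this into \eqref{eq}: each $B_{l_j}(x)$ is a scalar multiple of $x$, so multilinearity pulls the scalars outside $A^0_{k,1}$, and then the formulas \eqref{magic1} together with the elementary consequences $\ad_x^3=\langle x,x\rangle\ad_x$ and $\ad_x^2\circ\ad_x^2=\langle x,x\rangle\ad_x^2$ of \eqref{magic} collapse every summand to a polynomial in $\langle x,x\rangle$ times either $[x,y]$ or $\ad_x^2(y)$ (in particular no $\langle x,y\rangle$ appears in the coefficients); the same is true of $A^0_{n,1}(x,y)$ itself. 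Thus $A_{n,1}(x,\ldots,x,y)=c_n(x)[x,y]+d_n(x)\ad_x^2(y)$ with $c_n,d_n\in\R[\langle x,x\rangle]$. Since $A_{n,1}(x,\ldots,x,y)$ is homogeneous of degree $n$ in $x$, while $[x,y]$ and $\ad_x^2(y)$ are homogeneous of degrees $1$ and $2$ in $x$ and are linearly independent for $(x,y)$ in a dense open set, exactly one of $c_n,d_n$ is nonzero and it is a single monomial: $A_{2m+1,1}(x,y)=U_{2m+1}\langle x,x\rangle^m[x,y]$ for $m\geq0$ (with $U_1=1$ since $A_{1,1}=\br$), and $A_{2m,1}(x,y)=U_{2m}\langle x,x\rangle^{m-1}\ad_x^2(y)$ for $m\geq1$. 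Summing over $n$ yields the claimed formula for $x\rhd y$.

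To extract the recursion I would use \eqref{eqc} with $p=2n$. For fixed $x$ the map $i_x\cdots i_x A_{2n,1}$ equals $U_{2n}\langle x,x\rangle^{n-1}\ad_x^2$; since $\ad_x$ is a derivation, $\de(\ad_x)=0$, so by \eqref{comp} $\de(\ad_x^2)(y,z)=-2[[x,y],[x,z]]$, and the left-hand side of \eqref{eqc} is $-2U_{2n}\langle x,x\rangle^{n-1}[[x,y],[x,z]]$. On the right-hand side I split $-\sum_{r=1}^{2n-1}[A_{r,1}(x,y),A_{2n-r,1}(x,z)]$ according to the parity of $r$ (note $r$ and $2n-r$ have the same parity): the terms with $r=2a+1$ contribute $U_{2a+1}U_{2(n-a)-1}\langle x,x\rangle^{n-1}[[x,y],[x,z]]$, while the terms with $r=2a$ contribute $U_{2a}U_{2(n-a)}\langle x,x\rangle^{n-2}[\ad_x^2(y),\ad_x^2(z)]$. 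A short computation from \eqref{magic} (using $\ad_x^2(w)=\langle x,x\rangle w-\langle x,w\rangle x$) gives $[\ad_x^2(y),\ad_x^2(z)]=-\langle x,x\rangle[[x,y],[x,z]]$, so the right-hand side of \eqref{eqc} equals
\[ -\langle x,x\rangle^{n-1}[[x,y],[x,z]]\left(\sum_{a=0}^{n-1}U_{2a+1}U_{2(n-a)-1}-\sum_{a=1}^{n-1}U_{2a}U_{2(n-a)}\right). \]
Since $[[x,y],[x,z]]$ is not identically zero (test on a basis of $\mathfrak{so}(3)$ or $\mathfrak{sl}_2(\R)$), comparing the two sides gives $U_{2n}=\frac12\left(\sum_{a=0}^{n-1}U_{2a+1}U_{2(n-a)-1}-\sum_{a=1}^{n-1}U_{2a}U_{2(n-a)}\right)$, which for $n=1$ reads $U_2=\frac12U_1^2=\frac12$.

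The step I expect to be the main obstacle is the first one: correctly invoking Theorem~\ref{main2} and Corollary~\ref{sym}, and then carrying out the bookkeeping that shows that, after inserting the $B_l$ (all proportional to $x$) and reducing powers of $\ad_x$ via \eqref{magic}, every term of \eqref{eq} lands in the ``span'' of $[x,y]$ and $\ad_x^2(y)$ with coefficients that are genuine polynomials in $\langle x,x\rangle$ — this is precisely what makes the homogeneity argument conclusive. The identity $[\ad_x^2(y),\ad_x^2(z)]=-\langle x,x\rangle[[x,y],[x,z]]$ used at the end is elementary but sign-sensitive and should be double-checked on a basis.
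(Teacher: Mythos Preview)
Your argument is correct and follows essentially the same route as the paper: invoke Theorem~\ref{main2} (via $H^0=H^1=0$), use Corollary~\ref{sym} to force each $B_l$ to be a multiple of $\langle x,x\rangle^{(l-1)/2}x$, reduce powers of $\ad_x$ via \eqref{magic}, and then read off the recursion from \eqref{eqc} with $p=2n$ together with the identity $[\ad_x^2(y),\ad_x^2(z)]=-\langle x,x\rangle[[x,y],[x,z]]$. The only tactical difference is in the last step: the paper writes $A_{n,1}=C_nA_{n,1}^0$, subtracts the instance of \eqref{eqc} for the $A^0_{n,1}$ from that for the $A_{n,1}$, and then uses $(1-1)^{2n}=0$ to isolate $C_{2n}$, whereas you compute $\de(\ad_x^2)=-2[[x,\cdot],[x,\cdot]]$ directly from \eqref{comp}; your variant is slightly more direct and avoids the binomial identity.
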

 
 \begin{proof} By virtue of Theorem \ref{main},  $\di x\rhd y=\sum_{n=0}^\infty A_{n,1}(x,y)$ where the sequence $(A_{n,1})$ satisfies \eqref{eqm}. 
 	Moreover, since $\h$ is simple $H^0(\h)=H^1(\h)=0$ and we can apply  Theorem \ref{main2}. Thus 
 	$$\label{eq8} A_{n,1}(x,y)=A_{n,1}^0(x,y)+\sum_{\begin{array}{c} 2k\leq s=l_1+\ldots+l_{k}\leq n\\1\leq k\leq\left[\frac{n}2\right]
 		\end{array}	} A_{k,1}^0(B_{l_1}(x),\ldots,B_{l_k}(x),A_{n-s,1}^0(x,y)), $$and the $B_l$ are symmetric invariant. By virtue of Corollary \ref{sym},
 	\[ B_{2l}=0\esp B_{2l+1}(x)=c_l\langle x,x\rangle^lx. \]
 	Thus
 	\begin{eqnarray*}
 		A_{n,1}(x,y)&=&A_{n,1}^0(x,y)+\sum_{\begin{array}{c} 2k\leq s=2l_1+\ldots+2l_{k}+k\leq n\\1\leq k\leq\left[\frac{n}2\right]
 			\end{array}	}c_{l_1}\ldots c_{l_k} \langle x,x\rangle^{l_1+\ldots+l_k}A_{k,1}^0(x,A_{n-s,1}^0(x,y)).
 		\end{eqnarray*}But by using Corollary \ref{co1}, we have $\di A_{k,1}^0(x,A_{n-s,1}^0(x,y))=
 		\frac{(n+k-s)!}{k!(n-s)!}A_{n-2(l_1+\ldots+l_k)}^0(x,y)$ and hence we can write
 		\[ A_{n,1}(x,y)=\sum_{l=0}^{\left[\frac{n-1}2 \right]}K_{n,l}\langle x,x\rangle^lA_{n-2l}^0(x,y), \]where $K_{n,l}$ are constant such that $K_{n,0}=1$. Note that in particular $A_{2,1}(x,y)=A_{2,1}^0(x,y)$.
 		 		Now by using \eqref{magic1}, we get
 		\begin{eqnarray*}
 			A_{2n+1,1}(x,y)&=&\sum_{l=0}^{n} K_{2n+1,l}\langle x,x\rangle^lA_{2(n-l)+1}^0(x,y)\\
 			&=&\sum_{l=0}^{n} K_{2n+1,l}\langle x,x\rangle^l\frac{1}{(2(n-l)+1)!}\langle x,x\rangle^{n-l}[x,y]\\
 			&=&U_{2n+1}\langle x,x\rangle^n[x,y]={C_{2n+1}}A_{2n+1,1}^0(x,y),
 		\end{eqnarray*}where $\di U_{2n+1}=\frac{C_{2n+1}}{(2n+1)!}$ are constant. 
 		In the same way, one can show that there exists constants $\di U_{2n}=\frac{C_{2n}}{(2n)!}$ such that 		\[ A_{2n,1}(x,y)={U_{2n}\langle x,x\rangle^{n-1}}\ad_x^2(y)={C_{2n}}A_{2n,1}^0(x,y),  \]and get the desired expression of $x\rhd y$. 
 		
 		On the other hand, The equation \eqref{eqm} for $q=1$ and $p=2n$ holds for both the $A_n$ and the $A_n^0$ so we get
 		\begin{eqnarray*}
 		A_{2n,1}(x,[y,z])&=&C_{2n}A_{2n,1}^0(x,[y,z])\\&=&
 		C_{2n}[y,A_{2n,1}^0(x,z)]+C_{2n}[A_{{2n},1}^0(x,y),z]+C_{2n}\sum_{r=1}^{2n-1}[A_{r,1}^0(x,y),
 		A_{2n-r,1}^0(x,z)]\\
 		&=&C_{2n}[y,A_{2n,1}^0(x,z)]+C_{2n}[A_{2n,1}^0(x,y),z]+\sum_{r=1}^{2n-1}C_rC_{2n-r}[A_{r,1}^0(x,y),
 		A_{2n-r,1}^0(x,z)].
 		\end{eqnarray*}Thus
 		\[ \sum_{r=1}^{2n-1}(C_{2n}-C_rC_{2n-r})[A_{r,1}^0(x,y),
 		A_{2n-r,1}^0(x,z)]=0 \]and hence
 		\[ 0=\sum_{r=1}^{n-1}(C_{2n}-C_{2r}C_{2(n-r)})[A_{2r,1}^0(x,y),
 		A_{2(n-r),1}^0(x,z)]+\sum_{r=0}^{n-1}(C_{2n}-C_{2r+1}C_{2(n-r)-1})[A_{2r+1,1}^0(x,y),
 		A_{2(n-r-1)+1,1}^0(x,z)]. \]By using \eqref{magic1} we get
 		\begin{eqnarray*} 0&=&\sum_{r=1}^{n-1}\frac{(C_{2n}-C_{2r}C_{2(n-r)})\langle x,x\rangle^{n-2}}{(2r)!(2(n-r)!)}[\ad_x^2(y),
 			\ad_x^2(z)]+\sum_{r=0}^{n-1}\frac{(C_{2n}-C_{2r+1}C_{2(n-r)-1})\langle x,x\rangle^{n-1}}{(2r+1)!(2(n-r)-1)!}[[x,y],
 			[x,y]]. \end{eqnarray*}One can show easily by using \eqref{magic} that
 		\[  [\ad_x^2(y),
 		\ad_x^2(z)]+\langle x,x\rangle[[x,y],
 		[x,z]]=0\] and deduce that
 		\[ \sum_{r=1}^{n-1}\frac{1}{(2r)!(2(n-r)!)}(C_{2n}-C_{2r}C_{2(n-r)})=\sum_{r=0}^{n-1}\frac{1}{(2r+1)!(2(n-r)-1)!}(C_{2n}-C_{2r+1}C_{2(n-r)-1}). \]On the other hand
 		\[ 0=(1-1)^{2n}=\sum_{r=0}^{n}\frac{(2n)!}{(2r)!(2(n-r))!}- 
 		\sum_{r=0}^{n-1}\frac{(2n)!}{(2r+1)!(2(n-r)-1)!},\]
 	and finally,
 		\[ \frac{C_{2n}}{(2n)!}=\frac{1}{2}\left[\sum_{r=0}^{n-1}\frac{1}{(2r+1)!(2(n-r)-1)!}C_{2r+1}C_{2(n-r)-1}-\sum_{r=1}^{n-1}\frac{1}{(2r)!(2(n-r)!)}C_{2r}C_{2(n-r)}\right]. \]To complete the proof, it suffices to replace $\di\frac{C_{r}}{r!}$ by $U_r$.
 		\end{proof}
 
 \noindent{\bf Proof of Theorem \ref{h}}. \begin{proof} According to Proposition \ref{pr},
 	 there exists a sequence $(U_n)_{n\in\N^*}$ with $U_1=1$, $U_2=\frac12$, for any $x,y\in\h$,
 	\[ x\rhd y=y+  \left(\sum_{n=0}^\infty U_{2n+1}\langle x,x\rangle^n\right)[x,y]
 	+\left(\sum_{n=1}^\infty {U_{2n}\langle x,x\rangle^{n-1}}\right)\ad_x^2(y)
 	\] and for any $n\in\N^*$,
 	\begin{equation}\label{evend} U_{2n}=\frac12
 	\left[\sum_{r=0}^{n-1}{U_{2r+1}U_{2(n-r)-1}}-
 	\sum_{r=1}^{n-1}{U_{2r}U_{2(n-r)}}\right]. \end{equation}
 We will show that there exists a unique sequence $(a_n)_{n\geq1}$ such that the function 	 $F(t)=1+\sum_{t=1}^\infty a_nt^n$ satisfies
 	\[x\rhd y= \exp(F(\langle x,x\rangle )\ad_x)(y)=y+\sum_{n=0}^\infty F(\langle x,x\rangle)^{2n+1}A_{2n+1,1}^0(x,y)+\sum_{n=1}^\infty F(\langle x,x\rangle)^{2n}A_{2n,1}^0(x,y). \]Thus{\small
 		\begin{eqnarray*}
 			\exp(F(\langle x,x\rangle )\ad_x)(y)&=&y+
 			\left(\sum_{n=0}^\infty\frac{[F(\langle x,x\rangle)]^{2n+1}\langle x,x\rangle^n}{(2n+1)!}\right)[x,y]
 			+\left(\sum_{n=1}^\infty\frac{[F(\langle x,x\rangle)]^{2n}\langle x,x\rangle^{n-1}}{(2n)!}\right)\ad_x^2(y).
 		\end{eqnarray*}}Put
 		$[F(\langle x,x\rangle)]^{n}=\sum_{m=0}^\infty B_{n,m}\langle x,x\rangle^m$ and compute
 		  the coefficients $B_{n,m}$. Indeed,
 		\begin{eqnarray*} [F(\langle x,x\rangle)]^n&=&\left(1+a_1\langle x,x\rangle +a_2\langle x,x\rangle^2+\ldots+ a_m\langle x,x\rangle^m+R\right)^{n}\\
 			&=&
 			\left(1+a_1\langle x,x\rangle +a_2\langle x,x\rangle^2+\ldots+ a_m\langle x,x\rangle^m\right)^{n}+P, \end{eqnarray*}where $P$ contains terms of degree $\geq m+1$. The multinomial theorem gives
 		\[ \left(1+a_1\langle x,x\rangle +a_2\langle x,x\rangle^2+\ldots+ a_m\langle x,x\rangle^m\right)^{n}=
 		\sum_{k_0+\ldots+k_{m}=n}\frac{n!}{k_0!k_1!\ldots k_m!}a_1^{k_1}\ldots a_m^{k_m}\langle x,x\rangle^{k_1+2k_2+\ldots+mk_m}. \]
 		Thus
 		\[ B_{n,0}=1\esp B_{n,m}=\sum_{k_1+2k_2+\ldots+mk_{m}=m,k_0+k_1+\ldots+k_m= n}\frac{n!}{k_0!k_1!\ldots k_m!}a_1^{k_1}\ldots a_m^{k_m}\;\mbox{for}\; m\geq1. \]

 		So
 		\begin{eqnarray*}
 			\sum_{n=0}^\infty\frac{F(\langle x,x\rangle)^{2n+1}\langle x,x\rangle^n}{(2n+1)!}&=&\sum_{n=0}^\infty\sum_{m=0}^\infty \frac{B_{2n+1,m}\langle x,x\rangle^{m+n}}{(2n+1)!}
 			=\sum_{n=0}^{\infty}\left( \sum_{p=0}^{n}\frac{B_{2p+1,n-p}}{(2p+1)!}\right) \langle x,x\rangle^{n},\\
 		\sum_{n=1}^\infty\frac{F(\langle x,x\rangle)^{2n}\langle x,x\rangle^{n-1}}{(2n)!}&=&\sum_{n=1}^\infty\sum_{m=0}^\infty \frac{B_{2n,m}\langle x,x\rangle^{m+n-1}}{(2n)!}
 		=\sum_{n=1}^{\infty}\left( \sum_{p=1}^{n}\frac{B_{2p,n-p}}{(2p)!}\right) \langle x,x\rangle^{n-1}.	
 		\end{eqnarray*}For sake of simplicity and clarity, put
 		\[ V_{n,m}(a_1,\ldots,a_m)=\frac{B_{n,m}}{n!}
 		=\sum_{k_1+2k_2+\ldots+mk_{m}=m,k_0+k_1+\ldots+k_m= n}\frac{a_1^{k_1}\ldots a_m^{k_m}}{k_0!k_1!\ldots k_m!}. \]
 		
 		To prove the theorem we need to show that there exists a unique sequence $(a_n)_{n\geq1}$ such that
 		\begin{eqnarray}\label{od}
 		U_{2n+1}&=&\sum_{p=0}^{n}V_{2p+1,n-p}(a_1,\ldots,a_{n-p}),\quad n\geq1,\\
 		\label{even}U_{2n}&=&\sum_{p=1}^{n}V_{2p,n-p}(a_1,\ldots,a_{n-p}),\quad n\geq1.
 		\end{eqnarray}
 		Note first that the relation \eqref{evend} and the fact that $U_2=\frac12$ defines the sequence $(U_{2n})_{n\geq1}$ entirely in function of the sequence $(U_{2n+1})_{n\geq0}$. On the other hand, since $V_{1,n}(a_1,\ldots,a_n)=a_n$ and $U_1=1$ then
 		\[ U_3=a_1+\frac1{3!}\esp U_{2n+1}=a_n+\sum_{p=1}^{n}V_{2p+1,n-p}(a_1,\ldots,a_{n-p}),\quad n\geq2. \]
 		Since the quantity $\sum_{p=1}^{n}V_{2p+1,n-p}(a_1,\ldots,a_{n-p})$ depends only on $(a_1,\ldots,a_{n-1})$, these relations  define inductively and uniquely the sequence $(a_n)_{n\geq1}$ in function of $(U_{2n+1})_{n\geq0}$. To achieve the proof we need to prove \eqref{even}. We will proceed by induction and we will use the following relation
 		\begin{equation}\label{magic3} \frac{\partial V_{n,m}}{\partial a_l}(a_1,\ldots,a_m)=V_{n-1,m-l}(a_1,\ldots,a_{m-l}),\quad l=1,\ldots,m. \end{equation}
 		Indeed,
 		\begin{eqnarray*}
 		\frac{\partial V_{n,m}}{\partial a_l}(a_1,\ldots,a_m)&=&
 		\sum_{k_1+2k_2+\ldots+mk_{m}=m,k_0+k_1+\ldots+k_m= n,k_l\geq1}\frac{a_1^{k_1}\ldots a_l^{k_l-1}\ldots a_m^{k_m}}{k_0!k_1!\ldots(k_l-1)!\ldots k_m!}\\
 		&\stackrel{k_l'=k_l-1}=&\sum_{k_1+2k_2+\ldots+ lk_l'+\ldots+mk_{m}=m-l,k_0+k_1+\ldots+ k_l'+\ldots+k_m= n-1}\frac{a_1^{k_1}\ldots a_l^{k_l'}\ldots a_m^{k_m}}{k_0!k_1!\ldots(k_l')!\ldots k_m!}.
 		\end{eqnarray*}To conclude, it suffices to remark that in the relation
 		\[ k_1+2k_2+\ldots +lk_l'+\ldots+mk_{m}=m-l \]
 		the left side is a sum of nonnegative numbers and the right side is nonnegative so $(m-l+1)k_{m-l+1}=\ldots=mk_m=0$ and hence the relation is equivalent to
 		\[ k_1+2k_2+\ldots +(m-l)k_{m-l}=m-l. \]
 		Now, we are able to prove \eqref{even}. We proceed by induction. For $n=1$, we have $U_2=\frac12$ and $V_{2,0}=\frac12$. Suppose that the relation holds from 1 to $n-1$. By virtue of \eqref{evend}, we have
 	$$U_{2n}=\frac12
 	\left[\sum_{r=0}^{n-1}{U_{2r+1}U_{2(n-r)-1}}-
 	\sum_{r=1}^{n-1}{U_{2r}U_{2(n-r)}}\right]$$and all the $U_r$ appearing in this formula are given by \eqref{od} and \eqref{even} this implies that  $U_{2n}$ is a function of $(a_1,\ldots,a_{n-1})$ and we can put $U_{2n}=H(a_1,\ldots,a_{n-1})$. We can also put 
 	\[ \sum_{p=1}^nV_{2p,n-p}(a_1,\ldots,a_{n-p})=G(a_1,\ldots,a_{n-1}). \]
 	To show that $U_{2n}$ satisfies \eqref{even} is equivalent to showing
 	\[ H(0)=G(0)\esp \frac{\partial H}{\partial a_l}=\frac{\partial G}{\partial a_l},\quad l=1,\ldots n-1. \]
 	But $V_{n,m}(0)=0$ if $m\geq1$ and $V_{n,0}(0)=\frac1{n!}$. Hence
 	\begin{eqnarray*}
 	H(0)&=&\frac12\left(\sum_{r=0}^{n-1} \frac{1}{(2r+1)!(2(n-r)-1)!}
 	-\sum_{r=1}^{n-1} \frac{1}{(2r)!(2(n-r))!}  \right)\\
 	&=&\frac12\left(\sum_{r=0}^{n-1} \frac{1}{(2r+1)!(2(n-r)-1)!}
 	-\sum_{r=0}^{n} \frac{1}{(2r)!(2(n-r))!}  \right)+\frac{1}{(2n)!}\\
 	&=&-\frac12(1-1)^{2n}+\frac{1}{(2n)!}=\frac{1}{(2n)!},\\
 	G(0)&=&V_{2n,0}(0)=\frac{1}{(2n)!}=H(0).
 	\end{eqnarray*}For $r=0,\ldots,n-1$, by induction hypothesis $U_{2r+1}$ is given by \eqref{od} and by using \eqref{magic3} one can see easily that
 	$\di\frac{\partial U_{2r+1}}{\partial a_l}=U_{2(r-l)}$ if $l=1,\ldots,r$ and 0 if $l\geq r+1$. Similarly, we have $\di\frac{\partial U_{2r}}{\partial a_l}=U_{2(r-l)-1}$ if $l=1,\ldots,r-1$ and 0 if $l\geq r$. For sake of simplicity, we put
 	\[ \di\frac{\partial U_{2r+1}}{\partial a_l}=U_{2(r-l)}\esp \di\frac{\partial U_{2r}}{\partial a_l}=U_{2(r-l)-1}  \]with the convention $U_0=1$ and $U_s=0$ if $s$ is negative. Then, for $l=1,\ldots,n-1$, we have
 	\begin{eqnarray*}
 	\frac{\partial H}{\partial a_l}&=&\frac12
 	\left[\sum_{r=0}^{n-1}\left(\frac{\partial U_{2r+1}}{\partial a_l}U_{2(n-r)-1}
 	+\frac{\partial U_{2(n-r)-1}}{\partial a_l}U_{2r+1}\right)-
 	\sum_{r=1}^{n-1}\left(\frac{\partial U_{2r}}{\partial a_l}U_{2(n-r)}
 	+\frac{\partial U_{2(n-r)}}{\partial a_l}U_{2r}\right)\right]\\
 	&=&\frac12
 	\left[\sum_{r=0}^{n-1}\left({ U_{2(r-l)}}U_{2(n-r)-1}
 	+{ U_{2(n-r-l-1)}}U_{2r+1}\right)-
 	\sum_{r=1}^{n-1}\left({ U_{2(r-l)-1}}U_{2(n-r)}
 	+{ U_{2(n-r-l)-1}}U_{2r}\right)\right]\\
 	&=&\frac12\sum_{r=0}^{n-1-l}{ U_{2r}}U_{2(n-r-l)-1}+\frac12\sum_{r=0}^{n-1}{ U_{2(n-r-l-1)}}U_{2r+1}-\frac12\sum_{r=0}^{n-l-2}{ U_{2r+1}}U_{2(n-r-l-1)}
 	-\frac12\sum_{r=1}^{n-1}{ U_{2(n-r-l)-1}}U_{2r}\\
 	&=&\frac12U_{2(n-l)-1}+\frac12\sum_{r=n-l-1}^{n-1}{ U_{2(n-r-l-1)}}U_{2r+1}
 	-\frac12\sum_{r=n-l}^{n-1}{ U_{2(n-r-l)-1}}U_{2r}\\
 	&=&U_{2(n-l)-1}.
 	\end{eqnarray*}This completes the proof.
 	\end{proof}


\end{document}